\def\@setcopyright{}
\def\serieslogo@{}
\numberwithin{equation}{section}
\begin{document} 
\baselineskip=14.5pt

\bibliographystyle{plain}

\makeatletter
\def\imod#1{\allowbreak\mkern10mu({\operator@font mod}\,\,#1)}
\makeatother
\newcommand{\vectornorm}[1]{\left|\left|#1\right|\right|}
\newcommand{\ip}[2]{\left\langle#1,#2\right\rangle}
\newcommand{\ideal}[1]{\left\langle#1\right\rangle}
\newcommand{\sep}[0]{^{\textup{sep}}}
\newcommand{\Span}[0]{\operatorname{Span}}
\newcommand{\Tor}[0]{\operatorname{Tor}}
\newcommand{\Stab}[0]{\operatorname{Stab}}
\newcommand{\Orb}[0]{\operatorname{Orb}}
\newcommand{\Soc}[0]{\operatorname{Soc}}
\newcommand{\Kernel}[0]{\operatorname{ker }}
\newcommand{\Gal}[0]{\operatorname{Gal}}
\newcommand{\Aut}[0]{\operatorname{Aut}}
\newcommand{\Out}[0]{\operatorname{Out}}
\newcommand{\Inn}[0]{\operatorname{Inn}}
\newcommand{\lcm}[0]{\operatorname{lcm}}
\newcommand{\Image}[0]{\operatorname{im }}
\newcommand{\vol}[0]{\operatorname{vol }}
\newcommand{\pgl}[0]{\operatorname{PGL}_2(\mathbb{F}_3)}
\newcommand{\AGL}[0]{\operatorname{AGL}}
\newcommand{\PGL}[0]{\operatorname{PGL}}
\newcommand{\PSL}[0]{\operatorname{PSL}}
\newcommand{\PSp}[0]{\operatorname{PSp}}
\newcommand{\PSU}[0]{\operatorname{PSU}}
\newcommand{\Sp}[0]{\operatorname{Sp}}
\newcommand{\POmega}[0]{\operatorname{P\Omega}}
\newcommand{\Uup}[0]{\operatorname{U}}
\newcommand{\Gup}[0]{\operatorname{G}}
\newcommand{\mcFup}[0]{\operatorname{F}}
\newcommand{\Eup}[0]{\operatorname{E}}
\newcommand{\Bup}[0]{\operatorname{B}}
\newcommand{\Dup}[0]{\operatorname{D}}
\newcommand{\Mup}[0]{\operatorname{M}}
\newcommand{\mini}[0]{\operatorname{min}}

\newcommand{\maxi}[0]{\textup{max}}
\newcommand{\modu}[0]{\textup{mod}}
\newcommand{\nth}[0]{^\textup{th}}
\newcommand{\sd}[0]{\leq_{sd}}
 \newtheorem{theorem}{Theorem}[section]
 \newtheorem{proposition}[theorem]{Proposition}
 \newtheorem{lemma}[theorem]{Lemma}
 \newtheorem{corollary}[theorem]{Corollary}
 \newtheorem{conjecture}[theorem]{Conjecture}
 \newtheorem{definition}[theorem]{Definition}
 \newtheorem{question}[theorem]{Question}
 \newtheorem*{claim*}{Claim}
 \newtheorem{claim}[theorem]{Claim}
 \newtheorem{example}[theorem]{Example}
 \newtheorem*{example*}{Example}
 \newtheorem{remark}{Remark}
 \newtheorem*{remark*}{Remark}
 \newcommand{\mc}{\mathcal}
 \newcommand{\mf}{\mathfrak}
 \newcommand{\mcA}{\mc{A}}
 \newcommand{\mcB}{\mc{B}}
 \newcommand{\mcC}{\mc{C}}
 \newcommand{\mcD}{\mc{D}}
 \newcommand{\mcE}{\mc{E}}
 \newcommand{\mcF}{\mc{F}}
 \newcommand{\mcG}{\mc{G}}
 \newcommand{\mcH}{\mc{H}}
 \newcommand{\mcI}{\mc{I}}
 \newcommand{\mcJ}{\mc{J}}
 \newcommand{\mcK}{\mc{K}}
 \newcommand{\mcL}{\mc{L}}
 \newcommand{\mcM}{\mc{M}}
 \newcommand{\mcN}{\mc{N}}
 \newcommand{\mcO}{\mc{O}}
 \newcommand{\mcP}{\mc{P}}
 \newcommand{\mcQ}{\mc{Q}}
 \newcommand{\mcR}{\mc{R}}
 \newcommand{\mcS}{\mc{S}} 
 \newcommand{\mcT}{\mc{T}}
 \newcommand{\mcU}{\mc{U}}
 \newcommand{\mcV}{\mc{V}}
 \newcommand{\mcW}{\mc{W}}
 \newcommand{\mcX}{\mc{X}}
 \newcommand{\mcY}{\mc{Y}}
 \newcommand{\mcZ}{\mc{Z}}
 \newcommand{\mfp}{\mf{p}}
 \newcommand{\mfP}{\mf{P}}
 \newcommand{\mfq}{\mf{q}}
 \newcommand{\mfQ}{\mf{Q}} 
 \newcommand{\mfd}{\mf{d}}
 \newcommand{\mfa}{\mf{a}}
 \newcommand{\mfb}{\mf{b}}
 \newcommand{\oQ}{\overline{\QQ}}
 \newcommand{\AAA}{\mathbb{A}}
 \newcommand{\CC}{\mathbb{C}}
 \newcommand{\FF}{\mathbb{F}}
  \newcommand{\GG}{\mathbb{G}}
 \newcommand{\NN}{\mathbb{N}}
 \newcommand{\PP}{\mathbb{P}}
 \newcommand{\QQ}{\mathbb{Q}}
 \newcommand{\QQd}{\QQ^{(d)}}
 \newcommand{\RR}{\mathbb{R}}
 \newcommand{\ZZ}{\mathbb{Z}}
 \newcommand{\oneto}[1]{\{1,\dots,#1\}}
 \newcommand{\tors}[0]{\textup{tors}}
 \newcommand{\QQbar}[0]{\overline{\QQ}}

   \def\bbbone{{\mathchoice {\rm 1\mskip-4mu l} {\rm 1\mskip-4mu l}
   {\rm 1\mskip-4.5mu l} {\rm 1\mskip-5mu l}}}

 \renewcommand{\thefootnote}{\fnsymbol{footnote}}

\title[Relative Bogomolov extensions]{Relative Bogomolov extensions}
\author[Robert Grizzard]{Robert Grizzard}
\address{\newline University of Texas at Austin \newline Department of Mathematics, RLM 8.100 \newline 2515 Speedway, Stop C1200 \newline Austin, TX 78712-1202}
\email{rgrizzard@math.utexas.edu}
\today~~\currenttime~~CDT
\begin{abstract}
A subfield $K \subseteq \QQbar$ has the Bogomolov property if there exists a positive $\varepsilon$ such that no non-torsion point of $K^\times$ has absolute logarithmic height below $\varepsilon$.  We define a relative extension $L/K$ to be \emph{Bogomolov} if this holds for points of $L^\times \setminus K^\times$.  We construct various examples of extensions which are and are not Bogomolov.  We prove a ramification criterion for this property, and use it to show that such extensions can always be constructed if some rational prime has bounded ramification index in $K$ and $K/\QQ$ is Galois. 
\end{abstract}
\subjclass[2010]{11R04,11R21, 11G50,11S05, 12F05}
\keywords{Bogomolov property, heights, infinite algebraic extensions}
\maketitle
\section{Introduction}\label{intro}
We work within a fixed algebraic closure $\QQbar$ of $\QQ$ throughout this paper.  We write $h$ for the usual absolute logarithmic height on algebraic numbers (defined in Section \ref{lower}).  If $K$ is a subfield of $\QQbar$, then $K$ satisfies the \emph{Bogomolov property}, (B), if there exists some $\varepsilon > 0$ such that there is no element $\alpha \in K^\times$ such that $0 < h(\alpha)  < \varepsilon$.  This definition was first stated in \cite{bombierizannier}.  Recall that $h(\alpha) = 0$ if and only if $\alpha$ is a root of unity \cite[Theorem 1.5.9]{bombierigubler}.  We introduce the following generalization of (B) to relative extensions.

\begin{definition}\label{bogdef}\normalfont
Let $\QQ \subseteq K \subseteq L \subseteq \QQbar$ be fields.  We say that $L/K$ is \emph{Bogomolov}, or that $L/K$ satisfies the \emph{relative Bogomolov property}, (RB), if there exists $\varepsilon > 0$ such that
\begin{equation}\label{rebo}
\{\alpha \in L^\times ~\big|~ 0<h(\alpha)<\varepsilon\} \subseteq K.
\end{equation}
\end{definition}
\noindent The following facts are immediate from the definition.
\begin{proposition}\label{basic}
Suppose $K \subseteq L \subseteq M$ are subfields of $\QQbar$.
\begin{enumerate}
\item[(a)] If $K$ satisfies (B), then $L/K$ is Bogomolov if and only if $L$ satisfies (B);
\item[(b)] $M/K$ is Bogomolov if and only if $M/L$ and $L/K$ are both Bogomolov; and
\item[(c)] if $L\setminus M$ contains a root of unity and $M/K$ is not Bogomolov, then $L/K$ is not Bogomolov.
\end{enumerate}
\end{proposition}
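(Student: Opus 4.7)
My plan is to prove each part by unwinding Definition~\ref{bogdef}; the only real content is taking minima of Bogomolov constants, and, for (c), observing that multiplication by a root of unity preserves height.

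For (a), one direction is essentially vacuous: if $L$ satisfies (B) with witness $\varepsilon_L$, the set on the left of~\eqref{rebo} is empty for $\varepsilon=\varepsilon_L$ and so trivially contained in $K$. Conversely, if $L/K$ is Bogomolov with witness $\varepsilon_0$ and $K$ satisfies (B) with witness $\varepsilon_K$, set $\varepsilon=\min(\varepsilon_0,\varepsilon_K)$: any $\alpha\in L^\times$ with $0<h(\alpha)<\varepsilon$ would lie in $K$ by~\eqref{rebo} and then violate (B) for $K$, so no such $\alpha$ exists. Part (b) is similar: for the forward direction the single constant from $M/K$ works directly for both sub-extensions (one restricts to $L^\times\subseteq M^\times$ for $L/K$ and uses $K\subseteq L$ for $M/L$); and for the converse, if $M/L$ and $L/K$ are Bogomolov with witnesses $\varepsilon_1$ and $\varepsilon_2$, then $\varepsilon=\min(\varepsilon_1,\varepsilon_2)$ forces any small-height $\alpha\in M^\times$ first into $L$ by the former property and then into $K$ by the latter.

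For (c), let $\zeta$ denote the root of unity supplied by the hypothesis. The identity $h(\zeta\alpha)=h(\alpha)$ for all $\alpha\in\QQbar^\times$ (which follows from $h(\zeta)=0$ applied to $\zeta$ and $\zeta^{-1}$) lets $\zeta$ shift small-height counterexamples between cosets of the smaller field without affecting their height. Given an arbitrarily small-height element $\alpha$ produced by the failure of the Bogomolov property in the hypothesis, $\zeta\alpha$ has the same positive height, lies in the larger ambient field, and cannot lie in any intermediate subfield containing $\alpha$ but not $\zeta$ --- for otherwise $\zeta=(\zeta\alpha)/\alpha$ would be recovered there. This shift transfers the counterexample into the extension where Bogomolov is to be contradicted. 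The main obstacle is purely bookkeeping --- tracking which of $K$, $L$, $M$ contains each of $\alpha$, $\zeta$, and $\zeta\alpha$ at each step --- once the root of unity has been placed on the correct side of the relevant inclusion, the argument collapses to the one-line height identity above.
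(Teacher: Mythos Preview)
Your treatments of (a) and (b) are correct and are exactly what the paper has in mind: the paper simply declares these ``immediate from the definition,'' and your argument by taking minima of the witnessing constants is the natural way to unpack that.

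For (c), however, you are papering over a real problem in the \emph{statement} rather than in the proof. Under the standing hypothesis $K\subseteq L\subseteq M$, the set $L\setminus M$ is empty, so the hypothesis ``$L\setminus M$ contains a root of unity'' is never satisfied and (c) is vacuously true as printed. (That something is off in the paper here is also signalled by the reference just below to ``the converse of (d),'' when there is no item (d).) Your write-up seems to have noticed this --- you deliberately avoid naming which field contains $\alpha$, $\zeta$, and $\zeta\alpha$, and you close with a remark about ``placing the root of unity on the correct side of the relevant inclusion.'' But that is not a proof; it is a promise that a proof exists once someone fixes the statement.

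The honest thing to do is to say so explicitly. The paper's one-line justification (``multiplying an algebraic number by a root of unity does not affect the height'') together with the example $\QQ^{tr}(i)/\QQ^{tr}$ cited immediately afterward indicate that the intended assertion is the one you are implicitly proving: if $\zeta\in M\setminus L$ is a root of unity and $L/K$ is not Bogomolov, then $M/L$ is not Bogomolov. Your shift argument then works cleanly: given $\alpha\in L^\times\setminus K$ of arbitrarily small positive height, $\zeta\alpha\in M^\times$ has the same height and cannot lie in $L$ (else $\zeta=(\zeta\alpha)/\alpha\in L$). State the corrected version and give this two-line argument; do not leave the ``bookkeeping'' to the reader when the bookkeeping is the entire content.
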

Part (c) follows because multiplying an algebraic number by a root of unity does not affect the height.  In light of this observation, it would have been equivalent to define (RB) by replacing (\ref{rebo}) with
\begin{equation}\label{rebo2}
\{\alpha \in L^\times ~\big|~ h(\alpha)<\varepsilon\} \subseteq K.
\end{equation}
Note that the converse of (d) does not hold, as demonstrated in \cite[Example 5.3]{adz}, where it is shown that $\QQ^{tr}(i)/\QQ^{tr}$ is not Bogomolov.  Here $\QQ^{tr}$ denotes the maximal totally real extension of $\QQ$, which satisfies (B) \cite{schinzel}.  Interestingly, Pottmeyer \cite{pottmeyernote} has recently stated a bound that implies that every finite extension of $\QQ^{tr}(i)$ (the so-called ``maximal CM field'') satisfies (RB), using an archimedean estimate of Garza \cite{sgbound}.  In proving the following theorem we will construct examples where (RB) is and is not satisfied, for both finite and infinite extensions of a field failing (B).  These constructions will be exhibited in Section \ref{examples}.  
 
\begin{theorem}\label{4cases}
Let $K/\QQ$ be an infinite extension that does not satisfy (B), and suppose $L/K$ is an algebraic extension.  All of the following possibilities can occur:
\begin{enumerate}
\item[(a)] $L/K$ is finite and Bogomolov,
\item[(b)] $L/K$ is finite and not Bogomolov,
\item[(c)] $L/K$ is infinite and Bogomolov, and
\item[(d)] $L/K$ is infinite and not Bogomolov.
\end{enumerate}
\end{theorem}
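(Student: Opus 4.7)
The plan is to produce an explicit example in each of the four cases. The non-Bogomolov cases (b) and (d) come from direct constructions using radicals of $2$, while the Bogomolov cases (a) and (c) rely on the ramification criterion proved later in the paper, with (a) reduced to (c) by passing to a finite subextension.

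For case (d), take $K = \QQ(2^{1/n} : n \in \NN)$, which fails (B) since $h(2^{1/n}) = (\log 2)/n \to 0$, and $L = \QQbar$. Then $i \cdot 2^{1/n} \in L \setminus K$ (as $K \subset \RR$) with height $(\log 2)/n \to 0$. For case (b), take $K = \QQ(2^{1/n} : n \text{ odd})$ and $L = K(\sqrt{2})$, so $[L:K] = 2$. The factorization
\[ 2^{1/(2n)} = \sqrt{2} \cdot (2^{1/n})^{(1-n)/2}, \]
valid since $(1-n)/2 \in \ZZ$ for odd $n$, puts $2^{1/(2n)}$ in $L$; a computation of the $2$-adic value group of $K$ (generated by $\{1/n : n \text{ odd}\}$, hence consisting of fractions with odd denominator in lowest terms) shows $2^{1/(2n)} \notin K$. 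Its height $(\log 2)/(2n)\to 0$ then witnesses the failure of (RB).

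For case (c), fix a prime $p$ and take $K = \QQ(\mu_{p^\infty}, p^{1/p^\infty})$. This $K$ is Galois over $\QQ$, fails (B) (since $h(p^{1/p^n}) = (\log p)/p^n \to 0$), and is unramified at every rational prime $q\neq p$ (both the cyclotomic and radical towers at $p$ ramify only at $p$). Fix such a $q$, and set $L = K\cdot\QQ(\mu_{q^\infty}, q^{1/q^\infty})$, an infinite extension of $K$ introducing unbounded ramification at $q$. The ramification criterion, applied with the prime $q$ (at which $K$ has ramification index $1$), gives (RB) for $L/K$: any element of $L^\times$ of sufficiently small positive height must lie in $K$. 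For case (a), take $L_0 = K(q^{1/q}) \subset L$, a proper finite extension of $K$. Since the relative Bogomolov property is inherited by subextensions --- any $\alpha\in L_0^\times\setminus K$ also lies in $L^\times\setminus K$, so has height bounded below by the constant from $L/K$ --- the extension $L_0/K$ is Bogomolov as well.

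The main obstacle is case (c): constructing a Galois $K/\QQ$ that fails (B) yet has bounded ramification at some rational prime. Natural (B)-failing fields such as $\QQ(2^{1/n})$ are not Galois, while common infinite Galois extensions like $\QQ^{ab}$ and $\QQ^{tr}$ satisfy (B), and their Galois closures over $\QQ$ tend to introduce unbounded ramification at many primes at once. The choice $K = \QQ(\mu_{p^\infty}, p^{1/p^\infty})$ is designed to thread this needle: Galois-ness is forced by adjunction of the full $p$-power cyclotomic tower, failure of (B) is provided by the radical tower at $p$, and all ramification is concentrated at $p$, so that every other rational prime is unramified in $K$. Once this $K$ is fixed, the ramification criterion produces the desired Bogomolov extensions essentially automatically.
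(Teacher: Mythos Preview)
Your constructions for (b) and (d) are correct and close in spirit to the paper's Example~\ref{norb}, which takes $K = \QQ(b^{1/2^k} : k \geq 1)$ and $L = K(b^{1/3})$.

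Your case (c), however, does not work. With $K = \QQ(\mu_{p^\infty}, p^{1/p^\infty})$ and $L = K \cdot \QQ(\mu_{q^\infty}, q^{1/q^\infty})$, the field $L$ contains the primitive $q$-th root of unity $\zeta_q$, while $K$ does not (since $q$ is unramified in $K$ but ramified in $\QQ(\zeta_q)$). Because $K$ fails (B), there are $\alpha_n \in K^\times$ with $0 < h(\alpha_n) \to 0$; then $\zeta_q \alpha_n \in L^\times \setminus K^\times$ with $h(\zeta_q \alpha_n) = h(\alpha_n) \to 0$, so $L/K$ is \emph{not} Bogomolov. This is exactly the obstruction flagged after Proposition~\ref{basic} and again at the end of the introduction. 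Note also that neither of the paper's criteria says what you claim: Theorem~\ref{relbocrit} applies only to a \emph{finite} extension $K(\alpha)/K$ and requires specific discriminant inequalities, while Theorem~\ref{finram} merely asserts the \emph{existence} of some Bogomolov extension of the form $K(\sqrt[\ell]{\alpha})$, not that your chosen $L$ is one.

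Since you deduce (a) from (c), case (a) is also unproved as written. It can be salvaged: $K(q^{1/q})/K$ is Bogomolov by a direct application of Theorem~\ref{relbocrit} with $F = \QQ$ and $F' = \QQ(q^{1/q})$, since $\rho(K/\QQ) = \tfrac{1}{2}$ and $\mcE(K,F'/\QQ) = |D_{F'/\QQ}| = q^{2q-1} > q^{q/2}$. For (c), the paper's Example~\ref{yesrb} avoids the root-of-unity trap entirely: it takes $K = \QQ(3^{1/3^k})$ (Galois-ness is not needed) and adjoins an infinite tower of real quadratic fields $\sqrt{p_i}$, applying Lemma~\ref{prefall} at each step to get a uniform height bound. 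You could repair your example by dropping $\mu_{q^\infty}$ from $L$, but the (RB) verification then requires genuine work up the radical tower, not a one-line appeal to ``the ramification criterion.''
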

\noindent Note that it suffices to provide examples of (b) and (c).  Pottmeyer's example above also establishes (a), but our examples are quite different, using non-archimedean information instead of archimedean.

It is natural to ask what conditions can be placed on a field $K$ of algebraic numbers to ensure that there exists at least one nontrivial relative Bogomolov extension $L/K$.  To this end we prove the following, our main result.  
\begin{theorem}\label{finram}
Let $K/\QQ$ be an algebraic extension.  Assume there exists a (finite) rational prime $\ell$ and a number field $F \subseteq K$ such that no prime of $\mcO_F$ lying over $\ell$ is ramified in $K/F$ -- in particular this holds if $K/\QQ$ is Galois and some prime $\ell$ has finite ramification index in $K$.  Then there exist nontrivial relative Bogomolov extensions $L/K$.  These extensions can be constructed explicitly of the form $K(\sqrt[\ell]{\alpha})$ for appropriately chosen elements $\alpha \in K$.
\end{theorem}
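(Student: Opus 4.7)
The plan is to proceed in two stages: first construct an explicit candidate extension $L = K(\sqrt[\ell]{\alpha})$ with controlled ramification at a prime above $\ell$, then establish a ramification criterion showing that this extension is Bogomolov.

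\textbf{Construction.} Choose a prime $\mathfrak{p}$ of $\mathcal{O}_F$ lying over $\ell$ and an element $\alpha \in \mathcal{O}_F$ with $v_\mathfrak{p}(\alpha) = 1$. For any prime $\mathfrak{P}$ of $K$ above $\mathfrak{p}$, the assumption that no prime of $\mathcal{O}_F$ above $\ell$ is ramified in $K/F$ gives $e(\mathfrak{P}|\mathfrak{p}) = 1$, and hence $v_\mathfrak{P}(\alpha) = 1$. Then $x^\ell - \alpha$ is Eisenstein at $\mathfrak{P}$, so it is irreducible over $K_\mathfrak{P}$ and hence over $K$; in particular $\alpha$ is not an $\ell$-th power in $K$, so $L := K(\beta)$ with $\beta^\ell = \alpha$ is a proper degree-$\ell$ extension of $K$, totally ramified at $\mathfrak{P}$, with unique prime $\mathfrak{P}'$ of $L$ above $\mathfrak{P}$ and $e(\mathfrak{P}'|\ell) = \ell\cdot e(\mathfrak{p}|\ell)$ finite.

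\textbf{Ramification criterion.} I would aim to prove the following: if $L/K$ is a finite extension and there exists a prime $\mathfrak{P}'$ of $L$ such that $e(\mathfrak{P}'|\ell)$ is finite and $e(\mathfrak{P}'|\mathfrak{P}) > 1$ (where $\mathfrak{P} = \mathfrak{P}' \cap K$), then $L/K$ is Bogomolov. The strategy is: for any $\gamma \in L \setminus K$, produce a uniform positive lower bound for $h(\gamma)$ using the local contribution at $\mathfrak{P}'$. One passes to a number subfield $E \subseteq K$ containing the $K_\mathfrak{P}$-coordinates of $\gamma$ in an Eisenstein basis of $L_{\mathfrak{P}'}/K_\mathfrak{P}$, and works inside the finite extension $E' := E(\beta)$; the ramification $e(\mathfrak{P}'|\ell)$ is fixed as $E$ grows. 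The Eisenstein structure then forces elements of $L \setminus K$ to have a ``fractional'' $\mathfrak{P}'$-adic valuation (relative to $e(\mathfrak{P}'|\mathfrak{P})$) not achievable within $K$, so after normalizing by a suitable element of $K^\times$ a Bombieri–Zannier style local estimate at $\mathfrak{P}'$ in $E'$ yields a uniform $\varepsilon > 0$.

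\textbf{Main obstacle.} The crux is the height lower bound in the ramification criterion. The naive approach via Bombieri–Zannier's theorem on bounded local degree does not apply: bounded ramification at $\mathfrak{P}'$ over $\ell$ does not imply bounded local degree $[L_{\mathfrak{P}'}:\QQ_\ell]$, because the residue degree $f(\mathfrak{P}'|\ell)$ can be infinite (indeed $K$ itself typically fails $(B)$). The whole point is therefore to trade the bound on local degree for a bound that is relative to $K$: one must exploit the fact that an element of $L \setminus K$ has an irreducible minimal polynomial over $K$ whose Newton polygon at $\mathfrak{P}$ sees the enforced ramification in $L/K$, so that $\gamma$ cannot be locally approximated by elements of $K$ of comparable height. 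Once the criterion is established, applying it to the explicit extension built above yields Theorem \ref{finram}.
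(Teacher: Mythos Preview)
Your Eisenstein construction of $L=K(\sqrt[\ell]{\alpha})$ with $v_\mathfrak{p}(\alpha)=1$ is perfectly reasonable and closely parallels the paper's choice (the paper uses the Chinese Remainder Theorem to impose $v_{\mathfrak{p}_i}(\alpha^{\ell^f-1}-1)=1$ at \emph{every} $\mathfrak{p}_i\mid\ell$, which via Hecke's theory forces $\ell^\ell\mid D_{F'/F}$; your Eisenstein choice achieves comparable wild ramification at one prime).  The gap is in your ramification criterion.

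The criterion you state --- that a finite $L/K$ is Bogomolov as soon as some prime $\mathfrak{P}'$ of $L$ has $e(\mathfrak{P}'|\ell)$ finite and $e(\mathfrak{P}'|\mathfrak{P})>1$ --- is \emph{false}.  A counterexample is furnished by the paper's own Example~\ref{norb}: take $K=\QQ(2^{1/2},2^{1/4},2^{1/8},\dots)$ and $L=K(2^{1/3})$.  The prime $3$ is unramified in $K$ (only $2$ ramifies there) and is totally ramified in $\QQ(2^{1/3})/\QQ$, hence in $L/K$; thus $e(\mathfrak{P}'|3)=3$ is finite and $e(\mathfrak{P}'|\mathfrak{P})=3>1$.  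Yet $L/K$ is not Bogomolov, since $2^{1/3}\cdot 2^{-x}\in L\setminus K$ has height $|x+\tfrac13|\log 2\to 0$ as the dyadic rational $x\to -\tfrac13$.  Your heuristic about ``fractional $\mathfrak{P}'$-adic valuation'' breaks down precisely here: a general $\gamma\in L\setminus K$ need not have valuation $\not\equiv 0\pmod{e(\mathfrak{P}'|\mathfrak{P})}$, and the normalization by an element of $K^\times$ you allude to cannot be done with controlled height.

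What the paper actually does is quite different, and uses \emph{two} ingredients.  First, Silverman's relative discriminant lower bound (Theorem~\ref{relbocrit}) gives, for $\gamma\in L\setminus K$ and any number field $M$ with $F\subseteq M\subseteq K$,
\[
H(\gamma)\ \ge\ \ell^{-\rho(M/F)/2d(\ell-1)}\cdot N_{F/\QQ}(D_{F'/F})^{1/2d\ell(\ell-1)}\ \ge\ \ell^{(d-\rho(M/F))/2d(\ell-1)},
\]
where $\rho(M/F)=\delta(M)/[M:F]$.  This is useful only when $\rho(M/F)<d$, i.e.\ when $M$ is not too close to totally real.  Second, when $\rho(M/\QQ)$ is close to $1$, the paper observes that $M(\gamma)=M(\sqrt[\ell]{\alpha})$ still has a definite proportion of real places, and then Garza's archimedean inequality (\ref{sgform}) supplies the missing lower bound.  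The proof splits on a threshold $\theta\in(\tfrac{2\ell-1}{2\ell},1)$ and takes the minimum of the two resulting bounds.  Your purely $\ell$-adic strategy cannot succeed on its own: an archimedean input is genuinely needed to handle the regime where $K$ has (in the $\limsup$) nearly all real places.
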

\noindent This theorem should be compared with \cite[Theorem 2]{bombierizannier}, which states that a Galois extension with bounded \emph{local degrees} (ramification index times inertial degree) has the Bogomolov property.

We briefly describe what is known on fields with the Bogomolov property in order to put our results in context.  Schinzel \cite{schinzel} showed in 1973 that there is a positive lower bound on the height of totally real numbers, establishing (B) for the maximal totally real field $\QQ^{tr}$.  This can be described as an ``archimedean'' height estimate, and was generalized by Garza to a lower bound on the height of algebraic numbers with at least one real conjugate \cite{sgbound}.  Another common approach that has been used (for example in \cite{habegger}) for archimedean estimates is equidistribution, starting with Bilu's Theorem \cite{bilu}, but these techniques will not be used in the current paper in favor of the Schinzel-Garza inequality.

One non-archimedean strategy originates in Amoroso and Dvornicich's paper \cite{amorosodvornicich2000}, where it is shown that (B) is enjoyed by the maximal abelian extension $\QQ^{ab}$ of $\QQ$, which was generalized to relative extensions and strengthened considerably in \cite{az1} and \cite{az2}.  Their strategy involves estimating how close a certain automorphism in a Galois group is to the action of raising an element to a power, with respect to a place lying over some auxilliary prime.  This strategy is quite powerful and is also used in \cite{habegger}, the elliptic curve analogue of \cite{amorosodvornicich2000}, and in \cite{adz}, where it is summarized nicely by their Lemma 2.2.  The main theorem (Theorem 1.2) of the latter paper generalizes both the results on abelian extensions and \cite[Theorem 2]{bombierizannier}, which states that (B) is satisfied by a field having bounded local degrees above some rational prime.  

In our present efforts to prove that a relative extension $L/K$ is Bogomolov when $K$ may not satisfy (B), it is not clear that the Amoroso-Dvornicich technique can be used to produce any new results.  Instead we appeal to more classical bounds in terms of ramification.  Our main tool is the lower bound \cite[Theorem 2]{silvermanlb}, due to Silverman.  This bound is written in notation more similar to ours in \cite[Section 3]{widmerN}, where the author uses it effectively to give examples of fields satisfying the closely related Northcott property, (N).  This stronger property, first defined along with (B) in \cite{bombierizannier},  is satisfied by a field $K$ if for any $T$ at most finitely points in $K$ have height at most $T$.  Silverman's inequality generalizes to the relative case a type of bound going back to the theorem \cite[Theorem 1]{mahler} of Mahler, which is exactly the lower bound used in \cite[Theorem 2]{bombierizannier}, where as mentioned before the authors exploit the existence of a bound on local degrees above some finite rational prime.  Our Theorem \ref{finram} has the related hypothesis of finite ramification above a prime -- for this theorem we also require an archimedean estimate coming from the above stated theorem of Garza.

The rest of this paper is organized as follows.  
In Section \ref{lower} we introduce notation and prove a general criterion, Theorem \ref{relbocrit}, for when we can use ramification information to conclude that a finite relative extension $L/K$ is Bogomolov.  In Section \ref{lth} we describe how to apply these techniques to bound below the heights of elements properly contained in an extension of the form $K(\sqrt[\ell]{\alpha})$, using Hecke's classical theory of ramification in Kummer extensions.  We combine this with the archimedean Schinzel-Garza inequality to prove Theorem \ref{finram}.  Finally, in Section \ref{examples} we construct the aforementioned explicit examples.  We use our ramification criterion to construct explicitly a field $K$ (which is quite different from $\QQ^{tr}(i)$) such that for each $\alpha \in K^\times$ there is a Bogomolov extension of the form $K(\sqrt[\ell]{\alpha})$.  

We conclude the introduction by mentioning a few questions for further investigation.  As mentioned above, if $L\setminus K$ contains a root of unity $\zeta$ and $K$ does not satisfy (B), then $L\setminus K$ contains elements of arbitrarly small positive height of the form $\zeta \alpha$, with $\alpha \in K$.  If one could construct such an extension where the \emph{only} elements of small height in $L\setminus K$ were obtained by multiplying elements of $K$ by roots of unity, this would suggest a weaker version of (RB) that could be explored.  Pottmeyer has shown that all finite extensions of the maximal CM field are Bogomolov.  In this same spirit, it would be interesting to exhibit fields $K \subsetneq \QQbar$ admitting \emph{no} Bogomolov extensions.  One easy example of this can be found if $K$ is the subfield of $\QQbar$ fixed by complex conjugation, i.e. $\QQbar \cap \RR$ (if we first embed $\QQbar \hookrightarrow \CC$), but one might expect this to happen for other fields $K$ that are sufficiently ``big,'' for example pseudo-algebraically closed.  If this occurs for a field $K$ satisfying (B), this field would be maximal with respect to the Bogomolov property.
\section*{Acknowledgments}
The author wishes to thank Jeffrey Vaaler and Felipe Voloch for some useful discussions related to Corollary \ref{v}, and Lukas Pottmeyer for pointing out relevant properties of $\QQ^{tr}(i)$.



\section{Lower bounds and a ramification criterion for (RB)}\label{lower}
First we establish some notation conventions.  For a finite extension of number fields $M/F$, we write $D_{M/F}$ for the relative discriminant ideal, and $N_{M/F}$ for the relative ideal norm.  For a tower of number fields $M'/M/F$ will often use the well-known identity
\begin{equation}\label{chain}
D_{M'/F} = D_{M/F}^{[M':M]}\cdot N_{M/F}\left(D_{M'/M}\right) \text{\cite[Proposition 4.15]{narkiewicz}}.
\end{equation}
A prime $\mfp$ of $F$ will mean a prime ideal in the ring of integers $\mcO_F$, with corresponding non-archimedean valuation $v_\mfp$.  If $\pi$ is a uniformizing parameter for the associated place $v$, and if $\mfp$ divides the rational prime $\ell$, we normalize the absolute value $|\cdot|_v$ so that $|\pi|_v^{[F:\QQ]} = \ell^{f}$, where $f$ is the associated residue class degree.

The absolute logarithmic height of an algebraic number $\alpha$ is given by 
\begin{equation}\label{h}
h(\alpha) = \sum_v \log^+|\alpha|_v,
\end{equation}
the sum being taken over the places of any number field containing $\alpha$.  We denote the multiplicative height $H(\alpha) = \exp h(\alpha)$.  We will often use basic facts such as 
\cite[Lemma 1.5.18]{bombierigubler} 
and 
\cite[Proposition 1.5.15]{bombierigubler} 
without specific reference.

Let $F$ be a number field of degree $d$ over $\QQ$, and let $K/F$ be an algebraic extension.  We define
\begin{equation}\label{intrho}
\rho(K/F) = \limsup~\{\delta(M)/[M:F] ~\big|~ F \subseteq M \subseteq K, ~[M:F] < \infty\},
\end{equation}
where $\delta(M)$ denotes the number of archimedean places of $M$.  In this context the limit superior is taken over the directed set of finite subextensions of $K/F$.  In other words, $\rho(K/F)$ is the least real number $\rho$ such that for any finite extension $M/F$ contained in $K$, there is a finite extension $M'/M$ with $M' \subseteq K$ such that $\delta(M')/[M':F] \leq \rho$.  Note that $d/2 \leq \rho(K/F) \leq d$, and that $\rho(L/F) \leq \rho(K/F)$ for any tower $L/K/F$.  Of course if $K/F$ is finite, then $\rho(K/F) = \delta(K)/[K:F]$.  If $C/F$ is a finite extension of degree $s$ with relative discriminant ideal $D_{C/F}$, we define the \emph{norm excess discriminant} of $C/F$ with respect to $K$ to be
\begin{equation}\label{excess}
\mcE(K,C/F) = \inf_M ~ N_{F/\QQ} 
\left(
\frac
{D_{C/F}^e}
{\gcd \left(D_{C/F}^e,D_{M/F}^s\right)}
\right)^{1/e},
\end{equation}
Where the infimum is taken over finite extensions $M/F$ with $M \subseteq K$, and $e = [M:F]$.  The norm in (\ref{excess}) is less than or equal to  the norm of the relative discriminant of $C/F$, with equality if and only if none of the (finite) primes ramifying in $C/F$ are ramified in $M/F$.  Thus, if none of the primes ramifying in $C/F$ are ramified in $K/F$, we have
\begin{equation}
\mcE(K,C/F) = N_{F/\QQ}\left(D_{C/F}\right).
\end{equation}

Note that it is only possible to have $\mcE(K,C/F)>1$ if some prime dividing $D_{C/F}$ has bounded ramification index in $K/F$.  If $S$ is the set of primes dividing $D_{C/F}$ which have bounded ramification index in $K$, then the infimum in (\ref{excess}) is taken on when $M$ is large enough that no prime lying over $S$ ramifies in $K/M$.  

We will apply the following inequality of Silverman \cite[Theorem 2]{silvermanlb}, cf. \cite[Section 3]{widmerN} to produce a ramification criterion for (RB).

\begin{theorem}[Silverman]
If $\gamma$ generates a relative extension of number fields $B/M$, where $[B:M] = s$ and $[B:\QQ] = d$, then 
\begin{equation}\label{silvermanbound}
H(\gamma) \geq s^{-\frac{\delta(M)}{2d(s-1)}} \cdot N_{M/\QQ}\left( D_{B/M} \right)^{\frac{1}{2ds(s-1)}}.
\end{equation}
\end{theorem}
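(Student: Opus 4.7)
The plan is to follow Silverman's classical strategy: bound $N_{M/\QQ}(D_{B/M})$ from above by an expression involving $H(\gamma)$, and then invert. The algebraic input is the standard identity
\[ \bigl(\mathrm{disc}(f_\gamma)\bigr) \;=\; \mathfrak{c}^{\,2}\cdot D_{B/M} \qquad \text{in } \mathcal{O}_M, \]
valid for integral $\gamma \in \mathcal{O}_B$, where $f_\gamma(x) = \prod_{i=1}^s (x - \gamma_i) \in \mathcal{O}_M[x]$ is the minimal polynomial of $\gamma$ over $M$ and $\mathfrak{c} = [\mathcal{O}_B : \mathcal{O}_M[\gamma]]$ is the conductor. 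Taking ideal norms yields $N_{M/\QQ}(D_{B/M}) \leq \bigl|N_{M/\QQ}(\mathrm{disc}(f_\gamma))\bigr|$. I would handle the non-integral case by scaling $\gamma$ by a suitable element of $\mathcal{O}_M$ and tracking how this modifies both sides, verifying that the denominator contributions cancel so as to preserve the stated inequality.

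Next I would bound $\bigl|N_{M/\QQ}(\mathrm{disc}(f_\gamma))\bigr|$ place by place. At each archimedean place $w$ of $M$, choosing an extension to $B$, the Mahler/Hadamard discriminant inequality gives
\[ \prod_{i<j}|\gamma_i - \gamma_j|_w^{\,2} \;\leq\; s^{s} \prod_{i=1}^{s}\max(|\gamma_i|_w,\,1)^{2(s-1)}, \]
while at each non-archimedean place the ultrametric inequality gives a cleaner version with no leading constant. Summing the corresponding logarithmic inequalities over all places of $M$, weighted by local degrees, the archimedean contribution amounts to a term proportional to $\delta(M)\log s$; and Galois-invariance of the height, together with $\sum_{\tau:B\hookrightarrow\CC}\log^{+}|\tau(\gamma)| \leq d\cdot h(\gamma)$, controls the remaining product $\prod_{w}\prod_{i}\max(|\gamma_i|_w,1)^{2(s-1)}$ by a multiple of $h(\gamma)$. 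Combining, exponentiating, and rearranging produces a lower bound for $H(\gamma)$ of exactly the shape claimed in (\ref{silvermanbound}).

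The main obstacle is tracking the constants sharply enough to produce precisely the exponents $\delta(M)/(2d(s-1))$ and $1/(2ds(s-1))$ in the stated bound, as opposed to looser exponents that fall out of naive applications of the triangle inequality. In particular, one must group complex-conjugate embeddings of $M$ correctly so that Mahler's constant $s^{s}$ contributes to the archimedean tally once per \emph{archimedean place} of $M$ (weighted by local degree) rather than once per embedding, and one must carefully distribute the $d$ embeddings of $B$ over the $\delta(M)$ archimedean places of $M$. The reduction to integral $\gamma$, while routine, requires care to ensure the denominator adjustments do not corrupt the dependence of the final estimate on the relative discriminant.
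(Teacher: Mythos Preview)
The paper does not give its own proof of this statement: Silverman's inequality is simply quoted as a known result, with a citation to \cite[Theorem 2]{silvermanlb} (and a pointer to \cite[Section 3]{widmerN} for notation closer to the present paper). So there is no proof in the paper to compare against.

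That said, your sketch is essentially the argument Silverman uses in the cited paper: relate the polynomial discriminant of a generator to the relative field discriminant via the conductor, bound the polynomial discriminant at each archimedean place by Mahler's inequality $|\mathrm{disc}(f)| \le s^{s} M(f)^{2(s-1)}$, and assemble the local pieces into a height inequality. Your cautionary remarks about grouping complex-conjugate embeddings so that the factor $s^{s}$ is counted once per archimedean \emph{place} of $M$ (this is exactly what produces the $\delta(M)$ in the exponent rather than $[M:\QQ]$), and about handling non-integral $\gamma$ by scaling, are on point; these are precisely the places where a careless write-up would lose the sharp constants. There is no genuine gap in your outline.
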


This is a relative field discriminant version of a bound of Mahler \cite[Theorem 10]{mahler}.  Widmer exploited the dependence only on relative ramification in this bound to produce a ramification criterion for the Northcott property \cite{widmerN}.  Our most general criterion for the relative Bogomolov property using this bound is the following theorem.

\begin{theorem}\label{relbocrit}
Let $K/\QQ$ be an algebraic extension, and let $\alpha$ be an algebraic number with minimal polynomial $f(x)$ over $K$.  Let $L = K(\alpha)$, and let $F$ be a number field such that $F \subseteq K$ and $[F(\alpha):F] = [L:K]$.  Let $d = [F:\QQ]$, and $F' = F(\alpha)$.  Assume that $F'$ and $K$ are linearly disjoint over $F$.

Suppose that for each field $C$ with $F \subsetneq C \subseteq F'$ we have
\begin{equation}\label{crit}
\mcE(K,C/F) > s^{\rho s},
\end{equation}
where $s = [C:F]$ and $\rho = \rho(K/F)$.  Then $L/K$ is Bogomolov.

Explicitly, if $\gamma \in L^\times \setminus K^\times$, then

\begin{equation}\label{relbocritbound}
 H(\gamma) \geq \min_{F\subsetneq C\subseteq F'}~ \left\{\left(\mcE(K,C/F)\cdot s^{-\rho s} \right)^{\frac{1}{2ds(s-1)}}\right\}.
\end{equation}
\end{theorem}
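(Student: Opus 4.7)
The plan is to apply Silverman's inequality \eqref{silvermanbound} to $\gamma$ over a carefully chosen finite subextension $M$ of $K/F$, after first relating $N_{M/\QQ}(D_{B/M})$, where $B=M(\gamma)$, to $N_{F/\QQ}(D_{C/F})$ via the discriminant-chain formula \eqref{chain} applied to the two towers $B/M/F$ and $B/C/F$, with $C=F(\gamma)$.

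Fix $\gamma\in L^\times\setminus K^\times$ and set $C=F(\gamma)$, $s=[C:F]$. Linear disjointness of $F'$ and $K$ over $F$ forces $\gamma\notin F$, hence $F\subsetneq C\subseteq F'$ and $s\geq 2$; the same linear disjointness guarantees $[M(\gamma):M]=s$ for every finite $M$ with $F\subseteq M\subseteq K$. For such an $M$ put $e=[M:F]$ and $B=M(\gamma)=MC$. Equating the two expressions for $D_{B/F}$ furnished by \eqref{chain} in the two towers and reading the resulting identity $\mfp$-locally, using $v_\mfp(N_{C/F}(D_{B/C}))\geq 0$, I obtain
\[
v_\mfp\bigl(N_{M/F}(D_{B/M})\bigr)\;\geq\;\max\bigl(0,\ e\,v_\mfp(D_{C/F})-s\,v_\mfp(D_{M/F})\bigr),
\]
which yields the ideal-norm inequality $N_{M/\QQ}(D_{B/M})\geq N_{F/\QQ}\bigl(D_{C/F}^{e}/\gcd(D_{C/F}^{e},D_{M/F}^{s})\bigr)$.

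I now substitute this into \eqref{silvermanbound} applied to $\gamma$ generating $B/M$, and choose $M$ large enough in $K/F$ so that $N_{F/\QQ}(D_{C/F}^{e}/\gcd(D_{C/F}^{e},D_{M/F}^{s}))^{1/e}$ is within $\varepsilon$ of $\mcE(K,C/F)$ (possible by the infimum defining \eqref{excess}) while simultaneously $\delta(M)/[M:F]\leq \rho(K/F)+\varepsilon$ (possible by the limsup defining $\rho$). Sending $\varepsilon\to 0$ and simplifying the exponents gives
\[
H(\gamma)\;\geq\;\bigl(\mcE(K,C/F)\cdot s^{-\rho s}\bigr)^{1/(2ds(s-1))}.
\]
Since $F'/F$ is a finite separable extension of number fields, there are only finitely many candidate intermediate fields $C$; taking the minimum of this bound over the proper ones $F\subsetneq C\subseteq F'$ delivers \eqref{relbocritbound}. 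Under the hypothesis $\mcE(K,C/F)>s^{\rho s}$ each factor is strictly greater than $1$, which gives a uniform positive lower bound on $h(\gamma)$ and hence proves $L/K$ is Bogomolov.

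The delicate point is the simultaneous choice of $M$: $\mcE(K,C/F)$ is an infimum while $\rho(K/F)$ is a limsup, both over the directed set of finite subextensions of $K/F$, so a priori a single $M$ might not approach both near-optima. The key is that $\mcE(K,C/F)$ is controlled by the finitely many primes of $F$ dividing $D_{C/F}$ that have bounded ramification in $K/F$; once $M$ is large enough that the contribution of these primes is within $\varepsilon$ of the infimum, further enlarging $M$ cannot worsen that contribution (primes of unbounded ramification only send the relevant $\max$ to $0$), so $M$ may be extended freely thereafter to bring $\delta(M)/[M:F]$ within $\varepsilon$ of $\rho$.
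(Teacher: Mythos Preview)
Your overall strategy---Silverman's inequality applied to $B/M$, the chain formula \eqref{chain} applied to the two towers $B/M/F$ and $B/C/F$, and then optimizing over $M$---is exactly the paper's approach, and the discriminant manipulation via $v_\mfp$ is equivalent to the paper's $\lcm/\gcd$ computation. There is, however, a genuine gap at the very first step.

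You set $C=F(\gamma)$ and assert $F\subsetneq C\subseteq F'$. But $\gamma$ is only assumed to lie in $L=K(\alpha)$, not in $F'=F(\alpha)$, so in general $F(\gamma)\not\subseteq F'$. Concretely, take $F=\QQ$, $K=M=\QQ(\sqrt{2})$, $\alpha=\sqrt{3}$, and $\gamma=\sqrt{2}+\sqrt{3}\in L\setminus K$: then $F(\gamma)=\QQ(\sqrt{2},\sqrt{3})$ has degree $4$ over $F$ while $[M(\gamma):M]=2$, so your claims $C\subseteq F'$ and $[M(\gamma):M]=s$ both fail, and your exponents $e,s$ in the chain formula are wrong. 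Linear disjointness of $F'$ and $K$ says nothing about $F(\gamma)$ when $\gamma\notin F'$.

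The fix is what the paper does: first enlarge $M$ so that $\gamma\in M(\alpha)=MF'$ and $[M(\gamma):M]=[K(\gamma):K]$; then use the lattice isomorphism between intermediate fields of $MF'/M$ and of $F'/F$ (coming from linear disjointness) to \emph{define} $C$ as the field with $MC=B=M(\gamma)$. This $C$ genuinely sits inside $F'$ with $[C:F]=[B:M]=s$ and $[B:C]=e$, and it is independent of the particular (sufficiently large) $M$ chosen. With that correction your chain-formula computation goes through verbatim.

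One further remark: your ``delicate point'' is not actually delicate. Since $\mcE(K,C/F)$ is an \emph{infimum}, the quantity $N_{F/\QQ}\bigl(D_{C/F}^{e}/\gcd(D_{C/F}^{e},D_{M/F}^{s})\bigr)^{1/e}$ is automatically $\geq\mcE(K,C/F)$ for \emph{every} $M$, which is the direction you need for a lower bound on $H(\gamma)$. So there is no simultaneous optimization: you are free to enlarge $M$ solely to push $\delta(M)/[M:F]$ below $\rho+\varepsilon$, and then let $\varepsilon\to 0$.
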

\noindent Notice that the bound in (\ref{relbocritbound}) is ``uniform'' in that it only depends on the relative discriminants and degrees of the fields involved, not on the fields themselves.  


We fix some notation for our proof of Theorem \ref{relbocrit} that will be kept throughout this section.  Let $M/F/\QQ$ be a tower of finite extensions.  Let $d = [F:\QQ]$, 
$e = [M:F]$.  
Assume $\alpha$ generates an extension $F'/F$ of degree $m > 0$ such that $F'$ and $M$ are linearly disjoint over $F$.  Let $L = M(\alpha)$.  Suppose $\gamma \in L^\times \setminus M^\times$, and let $B = M(\gamma)$ and $s = [B:M]$.  There is a canonical lattice isomorphism between the intermediate fields between $M$ and $L$ and those between $F$ and $F'$, and we let $C$ be the field corresponding to $B$.  More concretely, if $W$ denotes the Galois closure of $F'/F$, then $W$ is also linearly disjoint with $M$ over $F$.  Then there is a canonical isomorphism  (restriction of domain) 
\begin{equation}
\phi: \Gal(MW/M) \overset{\sim}{\longrightarrow} \Gal(W/F),
\end{equation}
 since $W\cap M = F$.  Galois theory tells us that corresponding to $\phi\left(\Gal(MW/B)\right)$ is an intermediate field $C$ with $F \subsetneq C \subseteq F'$ and $[C:F] = s$.  The following field diagram illustrates the situation.
\begin{equation}\label{prefalldiag}
\begin{xy}
(40,25)*+{L}="L";%
(38,10)*+{B}="B";%
(20,5)*+{M}="M"; (60,5)*+{F'}="F'";%
(52.5,-4.5)*{~}="gh1";
(50.5,-2.5)*{~}="gh2";
(58,-10)*+{C}="C";%
(40,-15)*+{F}="F";%
(40,-27)*+{\QQ}="Q";%
{\ar@{-}_s "M";"B"};{\ar@{-} "B";"L"};
{\ar@{-}_s "F";"C"};{\ar@{-} "C";"F'"};
{\ar@{-}^{m} "M";"L"};{\ar@{-}_{e} "F'";"L"};
{\ar@{-}_{e} "F";"M"};{\ar@{-}^{m} "F";"F'"};
{\ar@{-}_{d} "Q";"F"};
{\ar@{-} "C";"gh1"};
{\ar@{-}_{e} "gh2";"B"};
\end{xy}
\end{equation}

The following lemma illustrates our use of Silverman's Inequality
\begin{lemma}\label{prefall}
Using the notation and assumptions of the preceeding paragraph, we have
\begin{equation}\label{pfb}
H(\gamma) \geq s^{-\frac{\rho(M/F)}{2d(s-1)}} \cdot 
N_{F/\QQ} 
\left(
\frac{D_{C/F}^e}
{\gcd\left(D_{C/F}^e, D_{M/F}^s\right)}
\right)^{\frac{1}{2des(s-1)}}.
\end{equation}
In particular, if $D_{F'/F}$ and $D_{M/F}$ are relatively prime we have
\begin{equation}\label{rppfb}
H(\gamma) \geq s^{-\frac{\rho(M/F)}{2d(s-1)}} \cdot 
N_{F/\QQ}
\left(D_{C/F}\right)^{\frac{1}{2ds(s-1)}}.
\end{equation}
\end{lemma}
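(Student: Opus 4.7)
The plan is to apply Silverman's inequality (\ref{silvermanbound}) directly to $\gamma$ as a generator of $B/M$, then use the linear disjointness of $F'/F$ and $M/F$ together with the tower formula (\ref{chain}) to rewrite the norm $N_{M/\QQ}(D_{B/M})$ in terms of $D_{C/F}$ and $D_{M/F}$. Since $M/F$ is finite we have $\rho(M/F) = \delta(M)/[M:F] = \delta(M)/e$, so the archimedean prefactor $s^{-\rho(M/F)/(2d(s-1))}$ in (\ref{pfb}) is precisely what Silverman's inequality produces once $[B:\QQ]$ is expanded in this tower.

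The ideal-theoretic heart of the argument rests on the identifications $B = MC$ and $M \cap C = F$, both of which follow from the linear disjointness hypothesis via the Galois isomorphism $\phi$ in diagram (\ref{prefalldiag}): since $C \subseteq W$ and $M \cap W = F$ we obtain $M \cap C = F$, and hence $[B:C] = [MC:C] = [M:F] = e$. Applying (\ref{chain}) to the two towers $B/M/F$ and $B/C/F$ then produces two expressions for the absolute discriminant $D_{B/F}$, namely
\begin{equation*}
D_{M/F}^{s} \cdot N_{M/F}(D_{B/M}) \;=\; D_{B/F} \;=\; D_{C/F}^{e} \cdot N_{C/F}(D_{B/C}).
\end{equation*}
Because $N_{C/F}(D_{B/C})$ is an integral ideal of $\mcO_F$, a prime-by-prime comparison of $\mfp$-adic valuations in this identity shows that $D_{C/F}^{e}/\gcd(D_{C/F}^{e}, D_{M/F}^{s})$ divides $N_{M/F}(D_{B/M})$; passing to absolute norms via the transitivity $N_{M/\QQ} = N_{F/\QQ} \circ N_{M/F}$ and substituting into Silverman's inequality yields (\ref{pfb}). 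For the coprime case (\ref{rppfb}), one notes that (\ref{chain}) applied to the tower $F'/C/F$ shows $D_{C/F}$ divides $D_{F'/F}$, so coprimality of $D_{F'/F}$ and $D_{M/F}$ forces $\gcd(D_{C/F}^{e}, D_{M/F}^{s}) = (1)$, and the $\gcd$ term disappears.

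The main obstacle will be the prime-by-prime divisibility step: although elementary, one must carefully check how the $\mfp$-adic valuations of $D_{M/F}^{s}$, $D_{C/F}^{e}$, $N_{M/F}(D_{B/M})$, and $N_{C/F}(D_{B/C})$ interact in the displayed identity to extract exactly the $\gcd$ quotient as a divisor of $N_{M/F}(D_{B/M})$. The remaining steps are routine bookkeeping, combining Silverman's exponents with the reformulation $\rho(M/F) = \delta(M)/e$ and the observation that the bound on $N_{M/F}(D_{B/M})$ automatically upgrades to a bound on $N_{M/\QQ}(D_{B/M})$ by applying $N_{F/\QQ}$.
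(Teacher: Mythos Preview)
Your proposal is correct and follows essentially the same route as the paper. Both arguments apply Silverman's inequality to $B/M$, rewrite $\delta(M)/[B:\QQ]$ as $\rho(M/F)/d$, use the tower formula (\ref{chain}) for $B/M/F$ to express $N_{M/F}(D_{B/M}) = D_{B/F}/D_{M/F}^s$, and then invoke $D_{C/F}^{e} \mid D_{B/F}$ (which is exactly your second tower $B/C/F$, once $[B:C]=e$ is established); the paper simply phrases the last step as $\lcm(D_{C/F}^e, D_{M/F}^s) \mid D_{B/F}$ and uses $\lcm(I,J)/J = I/\gcd(I,J)$, whereas you unpack the same divisibility valuation-by-valuation.
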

\begin{proof}
We apply Silverman's Inequality to the tower $B/M/\QQ$, obtaining
\begin{equation}\label{pfsb2}
H(\gamma) \geq s^{-\frac{\delta(M)}{2de(s-1)}} \cdot N_{M/\QQ}\left( D_{B/M} \right)^{\frac{1}{2des(s-1)}} = s^{-\frac{\rho(M/F)}{2d(s-1)}} \cdot N_{M/\QQ}\left( D_{B/M} \right)^{\frac{1}{2des(s-1)}} .
\end{equation}
Using basic properties of relative norms and discriminants, we have
\begin{align}
N_{M/\QQ}\left( D_{B/M} \right) 
= N_{F/\QQ}\left(N_{M/F}\left( D_{B/M} \right)\right)
= N_{F/\QQ}\left( \frac{D_{B/F}}{D_{M/F}^s} \right).
\end{align}
Since $D_{B/F}$ is divisible by both $D_{C/F}^{e}$ and $D_{M/F}^s$, we now have
\begin{align}\label{hihi}
N_{M/\QQ}\left( D_{B/M} \right) 
\geq N_{F/\QQ}\left( \frac{\lcm \left(D_{C/F}^{e},D_{M/F}^s\right)}{D_{M/F}^s} \right)
= N_{F/\QQ}\left( \frac{D_{C/F}^{e}}{\gcd \left(D_{C/F}^{e},D_{M/F}^s\right)} \right).
\end{align}
Combining (\ref{hihi}) with (\ref{pfsb2}) completes the proof of (\ref{pfb}), and (\ref{rppfb}) follows immediately.
\end{proof}

Now we move from the case of a finite extension $M/F$ to that of a possibly infinite extension $K/F$.  The following corollary completes our proof of Theorem \ref{relbocrit}.

\begin{corollary}\label{fall}
Let $K/\QQ$ be an algebraic extension, and let $\alpha$ be an algebraic number with minimal polynomial $f(x)$ over $K$.  Let $L = K(\alpha)$, and let $F$ be a number field such that $F \subseteq K$ and $[F(\alpha):F] = [L:K]$\footnote[2]{This is satisfied, for example, if $F$ contains the coefficients of $f(x)$}.  Let $d = [F:\QQ]$, $\rho = \rho(K/F)$ as in (\ref{intrho}), and $F' = F(\alpha)$.  Assume that $F'$ and $K$ are linearly disjoint over $F$.  If $\gamma \in L^\times \setminus K^\times$, then
\begin{equation}\label{mfb}
H(\gamma) \geq 
\min \left\{
s^{-\frac{\rho}{2d(s-1)}} \cdot 
\mcE(K,C/F)^{\frac{1}{2ds(s-1)}}
~\big|~
F\subsetneq C \subset F, ~s = [C:F] \right\}.
\end{equation}
In particular, if $D_{F'/F}$ and $D_{M/F}$ are relatively prime for all $F \subseteq M \subseteq K$ with $[M:\QQ] < \infty$, then
\begin{equation}\label{rpfb}
H(\gamma) \geq 
\min \left\{
s^{-\frac{\rho}{2d(s-1)}} \cdot 
N_{F/\QQ}
\left(D_{C/F}\right)^{\frac{1}{2ds(s-1)}}
~\big|~
F\subsetneq C \subset F, ~s = [C:F] \right\}.
\end{equation}
\end{corollary}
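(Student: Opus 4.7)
The plan is to deduce Corollary \ref{fall} from Lemma \ref{prefall} by approximating the (possibly infinite) extension $K/F$ by sufficiently large finite subextensions $M/F$, chosen so that the archimedean factor $s^{-\rho(M/F)/(2d(s-1))}$ in (\ref{pfb}) converges to $s^{-\rho/(2d(s-1))}$ and the discriminant-quotient factor converges to $\mcE(K,C/F)$. Fix $\gamma \in L^\times \setminus K^\times$. Since $L = K(\alpha)$ and $\gamma$ involves only finitely many coefficients from $K$ when written as a polynomial in $\alpha$, there is a finite $M_0$ with $F \subseteq M_0 \subseteq K$ and $\gamma \in M_0(\alpha)$.

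First I would observe that the intermediate field $C_M := M(\gamma) \cap F'$ produced by the Galois correspondence of diagram (\ref{prefalldiag}) stabilizes as $M$ ranges over finite subextensions of $K/F$ containing $M_0$. Indeed $M \mapsto C_M$ is monotone nondecreasing, while the linear disjointness of $F'$ and $M$ over $F$ (inherited from $F'$ and $K$) gives $[C_M:F] = [M(\gamma):M]$, which is monotone nonincreasing in $M$. These two monotonicities force $C_M$ to be constant on the directed set. Denote this common value by $C$ and set $s := [C:F]$; since $\gamma \notin K \supseteq M_0$ we have $s \geq 2$ and $F \subsetneq C \subseteq F'$.

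Next, given $\varepsilon > 0$, I would select a finite $M$ with $M_0 \subseteq M \subseteq K$ achieving two approximations simultaneously: (i) $\delta(M)/[M:F] \leq \rho + \varepsilon$, which is possible by the $\limsup$ characterization of $\rho = \rho(K/F)$ in (\ref{intrho}); and (ii) the discriminant-quotient in the definition of $\mcE(K,C/F)$ evaluates exactly to $\mcE(K,C/F)$, which is possible by the remark following (\ref{excess}) upon choosing $M$ so that every prime of $F$ dividing $D_{C/F}$ with bounded ramification index in $K/F$ already attains its maximal ramification index in $M/F$. A prime-by-prime analysis using the tower formula (\ref{chain}) shows that $v_\mfp(D_{M/F})/[M:F]$ is monotone nondecreasing in $M$, so the norm-excess quantity is monotone nonincreasing and conditions (i) and (ii) are both preserved under compositum within $K$; taking $M$ to be the compositum of a witness for (i) with one for (ii) therefore suffices. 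Applying Lemma \ref{prefall} to this $M/F$ yields
\[
H(\gamma) \;\geq\; s^{-\frac{\rho+\varepsilon}{2d(s-1)}} \cdot \mcE(K,C/F)^{\frac{1}{2ds(s-1)}},
\]
and letting $\varepsilon \to 0$ gives the bound in (\ref{mfb}) for the particular $C$ attached to $\gamma$. Since only finitely many intermediate fields $C$ with $F \subsetneq C \subseteq F'$ exist, taking the minimum over all such $C$ produces a uniform lower bound valid for every $\gamma \in L^\times \setminus K^\times$. The special case (\ref{rpfb}) is then immediate: under the coprimality hypothesis no prime ramifying in any such $C/F$ can ramify in any $M/F$, so $\mcE(K,C/F) = N_{F/\QQ}(D_{C/F})$.

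The main obstacle I anticipate is the stabilization of $C_M$ together with the simultaneous compatibility of the two approximations in $M$. Once those two ingredients are in place, the conclusion is essentially a mechanical passage to the limit in the bound of Lemma \ref{prefall}.
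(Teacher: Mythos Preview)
Your proposal is correct and follows essentially the same approach as the paper: approximate $K$ by a sufficiently large finite subextension $M/F$, apply Lemma \ref{prefall}, and pass to the limit. Your treatment is in fact more careful than the paper's terse proof, which simply asserts $\rho(M/F) \leq \rho$ and the analogous inequality for the norm-excess (both of which point the wrong way as written, since $\delta(M)/[M:F]$ is nonincreasing in $M$ so the $\limsup$ equals the infimum); your $\varepsilon$-argument handles the archimedean factor correctly. One small simplification: because $\mcE(K,C/F)$ is by definition the infimum over $M$ of the norm-quotient appearing in (\ref{pfb}), that quotient is automatically $\geq \mcE(K,C/F)$ for \emph{every} $M$, so your condition (ii) and the compositum argument supporting it are unnecessary---only the $\rho$-approximation (i) genuinely requires a careful choice of $M$.
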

\begin{proof}
Let $m = [L:K]$.  Let $M/F$ be a finite extension of degree $e$ such that $M \subseteq K$ and $[M(\gamma):M] = [K(\gamma):K]$.  We are now in the setting to apply Lemma \ref{prefall}, so we have inequalities (\ref{pfb}) and (\ref{rppfb}), where $C$ and $s$ are as defined in the paragraph preceeding Lemma \ref{prefall}.  Notice that $\rho(M/F) \leq \rho$ and
\begin{equation}
 N_{F/\QQ} 
\left(
\frac{D_{C/F}^e}
{\gcd\left(D_{C/F}^e, D_{M/F}^s\right)}
\right)^{1/e} \leq  
\mcE(K,C/F),
\end{equation}
by definition.  The corollary follows, as does Theorem \ref{relbocrit}.
\end{proof}


\section{Adjoining $\ell \nth$ roots and the proof of Theorem \ref{finram}}\label{lth}
Extensions formed by adjoining an $\ell \nth$ root of an element, where $\ell$ is a prime, are an easy source of examples in which we can successfully apply the bounds of the previous section.  An extension of prime degree have no intermediate extensions, so application of Theorem \ref{relbocrit} becomes much cleaner.  Furthermore, the discriminants of such extensions when the base field contains a primitive $\ell \nth$ root of unity (Kummer extensions) are completely understood thanks to classical work of Hecke (see \cite[\S 39]{Hecke}, cf. \cite[Section 10.2.3]{cohen2}).  We now illustrate how we can exploit this theory, even when the base field does not contain an $\ell \nth$ root of unity.

For the next lemma and its corollaries we use the following setup.  Let $F/\QQ$ be a finite
extension of degree $d$, let $\alpha \in \mcO_F$, and let $\ell$ be a rational prime.  Assume $\alpha$ is not an $\ell \nth$ power in $F$, which means that $f(x) = x^\ell - \alpha$ is irreducible over $F$.  To see this, note that a factorization of $f(x)$  over $F$ implies that $F$ contains elements of the form $\alpha^{m/\ell}$ and $\alpha^{n/\ell}$, such that a quotient of appropriate powers of these elements is a root of $f(x)$.  Let $F' = F(\alpha^{1/\ell})$ for some choice of the root.  Assume that $\ell \mcO_F$ and $\alpha \mcO_F$ are relatively prime ideals.  In the following lemmas $\mfp$ will always denote a prime of $F$ lying over $\ell$ with residue class degree $f(\mfp|\ell) = [\mcO_F/\mfp : \ZZ/\ell \ZZ]$ and ramification index $e(\mfp|\ell)$. For each $\mfp | \ell$ define $a(\mfp)$ to be the greatest integer $k$ such that the congruence   
\begin{equation}\label{acongruence}
x^\ell-\alpha \equiv 0 \imod {\mfp^k}
\end{equation}
has a solution in $F$.  

\begin{lemma}\label{acolem}
Let $\rho$ be a real number with $\frac{1}{2} \leq \rho < 1$.  If for each $\mfp | \ell$ we have 
\begin{equation}\label{acond}
a(\mfp) \leq 1 + \ell(1-\rho),
\end{equation}
then we have 
\begin{equation}\label{lrhol}
\ell^{\lceil \rho \ell \rceil} \mcO_F ~\big|~ 
D_{F'/F},
\end{equation}
where $\lceil x \rceil$ denotes the least integer greater than or equal to the real number $x$.
\end{lemma}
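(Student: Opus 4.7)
My plan is to prove $\ell^{\lceil\rho\ell\rceil}\mcO_F \mid D_{F'/F}$ prime-by-prime, so I would fix $\mfp \mid \ell$ in $F$, set $e_0 = v_\mfp(\ell)$, and pass to the completion $K = F_\mfp$ with uniformizer $\pi$. It then suffices to show $v_\mfp(D_{L/K}) \geq \lceil\rho\ell\rceil \cdot e_0$ for the local extension $L = K(\alpha^{1/\ell})$. First I would normalize: choose $\gamma \in \mcO_F$ realizing $a := a(\mfp)$ so that $v_\mfp(\gamma^\ell - \alpha) = a$. Because $\alpha\mcO_F$ and $\ell\mcO_F$ are coprime one has $v_\mfp(\alpha) = 0$, hence $v_\mfp(\gamma) = 0$, so dividing the radical generator by $\gamma$ places us in the normalized form $\theta^\ell = 1 + \pi^a u$ with $u$ a local unit.

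The minimal polynomial of $\varepsilon := \theta - 1$ over $K$ is $g(y) = (y+1)^\ell - 1 - \pi^a u$, whose coefficients have $v_\mfp$-valuations $0$ (leading), $e_0$ (each of the $\ell-1$ middle binomial terms), and $a$ (constant). A standard improvement argument --- if $\ell \mid a$, the surjectivity of Frobenius on the residue field lets us refine $\gamma$ by a $\pi^{a/\ell}$-correction and thereby strictly increase $a$ --- shows $\gcd(a,\ell) = 1$ at the optimum. Combined with the hypothesis $a \le 1 + \ell(1-\rho) < 1+\ell$, this forces the Newton polygon of $g$ to be the single segment from $(0,a)$ to $(\ell,0)$, so $g$ is irreducible over $K$ and $L/K$ is totally wildly ramified of degree $\ell$. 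To read off the different exponent, I would pass to the tower $K \subseteq E := K(\zeta_\ell) \subseteq E(\theta)$: the bottom step $E/K$ has degree dividing $\ell-1$ and is thus at most tamely ramified, while the top step is Kummer. Applying Hecke's classical formula for the different of a Kummer $\ell$-extension (\cite{Hecke} \S 39; cf.\ \cite{cohen2} \S 10.2.3) to $E(\theta)/E$ and then descending via multiplicativity of the different in the tower $E(\theta)/L/K$, after absorbing the controlled tame contribution from $E/K$, yields
\[
 v_\mfp(D_{L/K}) \;\geq\; \ell e_0 - (\ell-1)(a-1).
\]

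To conclude, the integrality of $a$ combined with the hypothesis gives $a-1 \le \lfloor\ell(1-\rho)\rfloor = \ell - \lceil\rho\ell\rceil$, hence
\[
 v_\mfp(D_{L/K}) \;\ge\; \ell e_0 - (\ell-1)\bigl(\ell - \lceil\rho\ell\rceil\bigr).
\]
A direct check --- using in addition the Case-A restriction $a \leq \lfloor \ell e_0/(\ell-1)\rfloor$ (which tightens the genuine maximum of $a$ below the hypothesis bound in the regime $e_0 < \ell - 1$) together with $\rho \ge 1/2$ --- then confirms that this lower bound is at least $\lceil\rho\ell\rceil e_0$. Carrying out this arithmetic verification uniformly across $(e_0, \ell, a, \rho)$, and in particular bookkeeping the descent from the Kummer setting $E(\theta)/E$ down to $L/K$ when $\zeta_\ell \notin K$ so that the tame contribution from $E/K$ is neither over- nor undercounted, will be the main technical obstacle of the proof.
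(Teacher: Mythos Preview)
Your approach shares the paper's core strategy---apply Hecke's formula for the discriminant of a Kummer extension after adjoining $\zeta_\ell$, then descend---but executes the descent differently: the paper works globally and uses the single relation $N_{F''/F}(D_{F''F'/F''}) \mid D_{F'/F}^{\,n}$ (with $F''=F(\zeta_\ell)$, $n=[F'':F]$) to pull the divisibility $\ell^{\lceil\rho\ell\rceil}\mid D_{F''F'/F''}$ down to $F$, whereas you localize and try to track the different exponent through the tower $K\subset E=K(\zeta_\ell)\subset EL$ to recover the same formula $v_\mfp(D_{L/K})\ge \ell e_0-(\ell-1)(a-1)$ downstairs.

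There is a genuine gap in your final step. Even if one grants your descended formula (which already needs the unproven identity $a_E=[E:K]\cdot a$), the ``direct check'' you defer is simply false in general: with $(\ell,\rho,e_0,a)=(5,\tfrac12,3,3)$ one has $\ell e_0-(\ell-1)(a-1)=7$ while $\lceil\rho\ell\rceil e_0=9$, and this parameter set satisfies both the hypothesis $a\le 1+\ell(1-\rho)=3.5$ and your Case-A bound $a\le\lfloor\ell e_0/(\ell-1)\rfloor=3$, so neither rescues you. This is realized by $F=\QQ(\sqrt[3]{5})$, $\alpha=6$: the unique $\mfp\mid 5$ has $e_0=3$, $f_0=1$, and one computes $a(\mfp)=3$; passing to $E=F_\mfp(\zeta_5)$ gives $a_E=12$, and a different-tower computation yields $v_\mfp(D_{F'/F})=7<9$. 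So not only does your arithmetic fail---the lemma itself appears to be false as stated. The paper's own descent has the same blind spot: it applies the Kummer-case argument to $F''$ as if the hypothesis $a(\mfP)\le 1+\ell(1-\rho)$ held there, but $a_{F''}(\mfP)\ge e(\mfP/\mfp)\,a(\mfp)$ can be much larger (here $a_{F''}\ge 12$). The downstream corollaries only invoke the lemma when $a(\mfp)=1$ or when $\ell$ is very large relative to $h(\alpha)$ and $d$, so the applications may survive, but no correct proof of the lemma in its stated generality is possible.
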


To simplify application of this lemma, we will prove the following two corollaries.

\begin{corollary}\label{v}
Let $\rho$ be a real number with $\frac{1}{2} \leq \rho < 1$.  Assume that for each $\mfp$ we have that $f(\mfp|\ell) = 1$.  There exists a constant $c$ depending only on $d$ and $\alpha$ such that if $\ell \geq c$, then 
\begin{equation}\label{lrholf1}
\ell^{\lceil \rho \ell \rceil} \mcO_F ~\big|~ 
D_{F'/F},
\end{equation}
as in Lemma \ref{acolem}.
\end{corollary}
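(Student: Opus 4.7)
The approach is to invoke Lemma \ref{acolem} prime-by-prime, verifying $a(\mfp) \leq 1 + \ell(1-\rho)$ at every $\mfp \mid \ell$, and directly handling any offending prime. First I would take $c$ larger than every rational prime ramified in $F/\QQ$, a finite set depending only on $F$. For such $\ell$, every $\mfp \mid \ell$ has $e(\mfp|\ell) = 1$, and combined with the hypothesis $f(\mfp|\ell) = 1$ this gives $F_\mfp \cong \QQ_\ell$ with $v_\mfp(\ell) = 1$. Hensel's lemma applied to $x^\ell - \alpha$ then says that any $x \in F$ achieving $v_\mfp(x^\ell - \alpha) > 2v_\mfp(\ell) = 2$ would lift by Newton iteration to an $\ell$-th root of $\alpha$ in $\QQ_\ell$. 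Hence, provided $\alpha \notin (\QQ_\ell^\times)^\ell$, we have $a(\mfp) \leq 2$, which is $\leq 1 + \ell(1-\rho)$ once $\ell \geq 1/(1-\rho) + 1$.

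The main obstacle is the residual possibility that $\alpha$ is a local $\ell$-th power in $\QQ_\ell$ even though, by hypothesis, it is not one in $F$. Density of $\mcO_F$ in $\mcO_{F_\mfp}$ then forces $a(\mfp) = \infty$, making Lemma \ref{acolem} unavailable at $\mfp$, and I would sidestep this by computing the $\mfp$-contribution to $D_{F'/F}$ by hand. Writing $\alpha = y^\ell$ with $y \in \QQ_\ell$, the polynomial $x^\ell - \alpha$ factors over $\QQ_\ell$ as $(x - y) g(x)$, where $g$ has degree $\ell - 1$ with roots $y \zeta_\ell^j$ for $j = 1, \ldots, \ell - 1$. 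These roots form a single orbit under $\Gal(\QQ_\ell(\zeta_\ell)/\QQ_\ell)$, so $g$ is irreducible and $F' \otimes_F F_\mfp \cong \QQ_\ell \times \QQ_\ell(\zeta_\ell)$. The second factor is tamely and totally ramified of degree $\ell - 1$, so it contributes $\ell - 2$ to $v_\mfp(D_{F'/F})$, which exceeds $\lceil \rho \ell \rceil$ as soon as $\ell \geq 3/(1-\rho)$.

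Since the divisibility $\ell^{\lceil \rho\ell \rceil}\mcO_F \mid D_{F'/F}$ is equivalent to $v_\mfp(D_{F'/F}) \geq e(\mfp|\ell)\lceil \rho\ell \rceil = \lceil \rho\ell \rceil$ at every $\mfp \mid \ell$, and this per-prime bound is established in both cases --- through Lemma \ref{acolem} when $\alpha \notin (\QQ_\ell^\times)^\ell$ and through the cyclotomic calculation otherwise --- the conclusion follows for all $\ell \geq c$, where $c$ is chosen to dominate the ramified primes of $F/\QQ$ together with the $\rho$-dependent thresholds arising in each case.
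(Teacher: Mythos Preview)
Your argument is correct, but it follows a genuinely different route from the paper's. The paper never separates into the two local cases you consider; instead it bounds $a(\mfp)$ directly by a Liouville-type height inequality. Using $-h(\beta)\le \log|\beta|_v$ with $\beta=\alpha^{\ell-1}-1$, one gets
\[
v_\mfp(\alpha^{\ell-1}-1)\ \le\ \frac{d\log 2}{\log \ell}+\frac{(\ell-1)\,d\,h(\alpha)}{\log \ell},
\]
which is $O(\ell/\log\ell)$ and hence $\le 1+\ell(1-\rho)$ once $\ell$ is large in terms of $d$, $h(\alpha)$, and $\rho$. Since this holds at \emph{every} $\mfp\mid\ell$, Lemma~\ref{acolem} applies as a black box, with no need to treat separately the case where $\alpha$ is a local $\ell$-th power.

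Your approach trades the height machinery for Hensel and an explicit local factorization; this is more elementary and makes the constant $c$ depend only on the ramified primes of $F/\QQ$ and on $\rho$, essentially independent of $\alpha$. The price is a small technical wrinkle: when some $\mfp$ falls into your cyclotomic case (so $a(\mfp)=\infty$), you can no longer invoke Lemma~\ref{acolem} as stated for the remaining primes, since its hypothesis is global. This is easily repaired: at a prime in your first case one actually has $a(\mfp)=1$ (for $e=f=1$, the $\ell$-th powers in $(\ZZ/\ell^2)^\times$ are exactly the Teichm\"uller lifts, so $a(\mfp)\ge 2$ already forces $\alpha$ to be a local $\ell$-th power), whence $x^\ell-\alpha$ becomes Eisenstein after a shift and $v_\mfp(D_{F'/F})=\ell\ge\lceil\rho\ell\rceil$ directly. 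With that local computation in hand, your prime-by-prime assembly goes through without reference to Lemma~\ref{acolem}.
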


\begin{corollary}\label{a=1cor}
Suppose that for each $\mfp | \ell$ we have
\begin{equation}\label{a=1}
v_\mfp(\alpha^{\ell^{f}-1}-1)= 1,
\end{equation}
where $f = f(\mfp|\ell)$.  Then each of the primes $\mfp$ is totally ramified in $F'/F$, and
\begin{equation}\label{ll}
\ell^{\ell} \mcO_F ~\big|~ 
D_{F'/F}.
\end{equation}
\end{corollary}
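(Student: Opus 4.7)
The plan is to show that the hypothesis forces $a(\mfp)=1$ for every prime $\mfp\mid\ell$, and then feed this into Lemma \ref{acolem}. Once $a(\mfp)=1$, the condition $a(\mfp)\leq 1+\ell(1-\rho)$ of Lemma \ref{acolem} is satisfied for every $\rho\in[1/2,1)$; taking $\rho=1-1/(2\ell)$ gives $\lceil\rho\ell\rceil=\ell$, and the lemma yields $\ell^\ell\mcO_F\mid D_{F'/F}$. Total ramification of each $\mfp\mid\ell$ in $F'/F$ is then automatic: $v_\mfp(D_{F'/F})\geq\ell\cdot e(\mfp\mid\ell)>0$ shows $\mfp$ is ramified in $F'/F$, and in a prime-degree extension any ramified prime is totally ramified.

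The inequality $a(\mfp)\geq 1$ is easy: the residue field $\mcO_F/\mfp\cong\FF_{\ell^f}$ has multiplicative group of order $\ell^f-1$, coprime to $\ell$, so the $\ell$-th power map on it (which is in fact the Frobenius) is a bijection. Hence $\bar\alpha$ has a unique $\ell$-th root in the residue field, and the explicit formula lifts to $\beta_0=\alpha^{\ell^{f-1}}\in\mcO_F$, which satisfies $\beta_0^\ell\equiv\alpha\imod\mfp$.

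The main step is $a(\mfp)<2$. The key observation is a lifting-independence fact: if $\beta,\beta'\in\mcO_{F,\mfp}$ satisfy $\beta\equiv\beta'\imod\mfp$, then automatically $\beta^\ell\equiv\beta'^\ell\imod\mfp^2$. This follows from a routine binomial expansion of $(\beta+\pi t)^\ell$: every cross-term $\binom{\ell}{k}\beta^{\ell-k}(\pi t)^k$ with $1\leq k\leq\ell-1$ lies in $\mfp^{e(\mfp\mid\ell)+k}\subseteq\mfp^2$ because $\ell\in\mfp$, and the remaining term $(\pi t)^\ell$ lies in $\mfp^\ell\subseteq\mfp^2$. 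Consequently any $\beta$ satisfying $\beta^\ell\equiv\alpha\imod\mfp^2$ must reduce modulo $\mfp$ to the unique $\ell$-th root $\bar\beta_0$ of $\bar\alpha$, and therefore $\beta_0^\ell\equiv\alpha\imod\mfp^2$ would have to hold. But $\beta_0^\ell=\alpha^{\ell^f}$, so this congruence is exactly $\alpha^{\ell^f-1}\equiv 1\imod\mfp^2$, which contradicts the hypothesis $v_\mfp(\alpha^{\ell^f-1}-1)=1$. Hence $a(\mfp)=1$.

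The only substantive obstacle is isolating the lifting-independence step; once that is in place, the hypothesis $v_\mfp(\alpha^{\ell^f-1}-1)=1$ is seen as exactly the sharp obstruction preventing any improvement of the congruence from $\mfp$ to $\mfp^2$. Everything else is a direct reduction to Lemma \ref{acolem}.
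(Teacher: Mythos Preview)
Your proof is correct and follows the same strategy as the paper's: establish $a(\mfp)=1$ for each $\mfp\mid\ell$ and then invoke Lemma~\ref{acolem} (the paper leaves the choice of $\rho$ and the deduction of total ramification implicit, whereas you spell both out). The only difference is that where the paper cites \cite[Proposition~10.2.13]{cohen2} to rule out solutions of $x^\ell\equiv\alpha\pmod{\mfp^k}$ for $1<k<\ell+1$, you give a self-contained binomial argument showing directly that no solution exists modulo $\mfp^2$; this is exactly the special case needed, and your ``lifting-independence'' observation is essentially the content of that proposition in the range you use.
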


\begin{proof}[Proof of Lemma \ref{acolem}]
Let $\mfp$ be a prime of $F$ lying over $\ell$ with ramification index $e = e(\mfp|\ell)$ and residual degree $f = f(\mfp|\ell)$.  First, assume that $F$ contains a primitive $\ell \nth$ root of unity, and notice that this means that
\begin{equation}\label{el}
\ell-1 | e.
\end{equation}
By \cite[Theorem 10.2.9 (3)]{cohen2}, (\ref{acond}) implies that $\mfp$ is totally ramified in $F'/F$, and we have
\begin{equation}\label{heckesay}
v_\mfp\left(D_{F'/F}\right) = (\ell-1)(\ell \frac{e(\mfp|\ell)}{\ell-1} +1-a(\mfp)).
\end{equation}
Combining (\ref{acond}) with (\ref{el}), we certainly have
\begin{equation}\label{acondb}
a(\mfp) \leq 1 + \frac{\ell e (1 - \rho)}{\ell - 1}, 
\end{equation}
and combining this with (\ref{heckesay}) yields
\begin{equation}\label{heckesaynow}
v_\mfp\left(D_{F'/F}\right) \geq e\cdot \rho \ell.
\end{equation}
This means that
\begin{equation}
D_{F'/F} \subseteq \prod_\mfp \mfp^{e(\mfp|\ell) \cdot \lceil \rho \ell \rceil} = \ell^{\lceil \rho \ell \rceil} \mcO_F,
\end{equation}
and we now have (\ref{ll}) in the case where $F$ contains a primitive $\ell \nth$ root of unity.  In general, let $F'' = F(\zeta_\ell)$, where $\zeta_\ell$ is a primitive $\ell \nth$ root of unity.  Let $n = [F'':F]$.  We have $n | \ell-1$, and so $F'' \cap F' = F$ and $[F''F':F'] = n$.  We claim that
\begin{equation}\label{discfall}
N_{F''/F}\left(D_{F''F'/F''}\right) ~\big|~ D_{F'/F}^n.
\end{equation}
An easy way to see this is by considering relative different ideals as generated by the differents of elements, as in \cite[Theorem 4.16]{narkiewicz}.  Since $F'/F$ and $F''F'/F''$ are generated by the same polynomial, it is clear that 
\begin{equation}\label{diffies}
\mcD_{F'/F} \mcO_{F''F'} \subseteq \mcD_{F''F'/F''}.
\end{equation}
Taking the norm $N_{F''F'/F}$ of both sides of (\ref{diffies}) yields (\ref{discfall}).  Our previous argument shows that $D_{F''F'/F''}$ is divisible by $\ell^{\lceil \rho \ell \rceil} \mcO_{F''}$, and so (\ref{discfall}) gives us that $D_{F'/F}^n$ is divisible by $\ell^{\lceil \rho \ell \rceil \cdot n} \mcO_F$, and take $n\nth$ roots.
\end{proof}
\begin{proof}[Proof of Corollary \ref{a=1cor}]
Since $(\mcO_F/\mfp)^\times$ has order $\ell^f-1$, we have $\alpha^{\ell^f-1} - 1 \equiv 0 \imod \mfp$, so $x_1 = \alpha^{\ell^{f-1}}$ is a solution to (\ref{acongruence}) for $k=1$.  We have 
\begin{equation}
v_\mfp(x_1^\ell - \alpha) = v_\mfp\left(\alpha^{\ell^f}-\alpha \right) = v_\mfp\left(\alpha^{\ell^f-1}-1\right).                                                                                                                                                                                            \end{equation}
Now, by \cite[Proposition 10.2.13]{cohen2}, there is no solution to (\ref{acongruence}) if $v_\mfp\left(\alpha^{\ell^f-1}-1\right) < k < \ell + 1$.  If we assume (\ref{a=1}) holds, this means that $a(\mfp) = 1$ for all $\mfp|\ell$, and the corollary follows directly from Lemma \ref{acolem}.
\end{proof}

\begin{proof}[Proof of Corollary \ref{v}]
We now assume $f(\mfp|\ell) = 1$ for all $\mfp | \ell$.  The corollary will be proved once we show that, for $\ell$ sufficiently large, the inequality (\ref{acond}) is satisfied.  It is well known that, for an algebraic number $\beta$ and a finite set of places $S$ of a field containing $\beta$, we have
\begin{equation}\label{liouville}
-h(\beta) \leq \sum_{v\in S} \log |\beta|_v \leq h(\beta) \text{\cite[(1.8, p. 20)]{bombierigubler}}.
\end{equation}
We fix a prime $\mfp |\ell$ and take the set $S$ to consist of only that place $v$ corresponding to $\mfp$.  We have $h(\alpha^{\ell-1}-1) \leq \log 2 + (\ell-1) h(\alpha)$, and so the left-hand inequality of (\ref{liouville}) yields
\begin{equation}
-\log 2 - (\ell-1) h(\alpha) \leq |\alpha^{\ell-1}|_v = -\frac{\log \ell}{d}\cdot v_\mfp(\alpha^{\ell-1}-1),
\end{equation}
so that
\begin{equation}
v_\mfp(\alpha^{\ell-1}-1) \leq \frac{d \log 2}{\log \ell} + \frac{(\ell-1)d\cdot h(\alpha)}{\log \ell} \leq 1 + \ell(1-\rho) 
\end{equation}
if $\ell$ is sufficiently large in terms of $d$ and $\alpha$.  As in the previous proof, the congruence (\ref{acongruence}) has no solution in $F$ if $v_\mfp\left(\alpha^{\ell-1}-1\right) < k < \ell + 1$, and so we have $a(\mfp) = v_\mfp(\alpha^{\ell-1}-1)$, establishing the inequality (\ref{acond}) and completing the proof.
\end{proof}

\begin{proof}[Proof of Theorem \ref{finram}]
Let $F$ be a number field of degree $d$ over $\QQ$, and let $\ell$ be a rational prime.  Let $\mfp_1,\dots \mfp_n$ be the primes of $F$ lying above $\ell$.
For $i = 1 \dots n$, let $\beta_i$ be a nontrivial element of $(\mcO_F/\mfp_i^2)^\times$ such 
that $\beta^{\ell^f} \equiv 1$.  By the Chinese Remainder Theorem we can find an element $\alpha \in \mcO_F$ such $\alpha \equiv \beta_i \imod{\mfp_i}$ for each $i$.  Therefore we have 
\begin{align}
v_{\mfp_i}(\alpha^{\ell^f-1}-1) &= 1, ~\text{for}~i = 1\dots n.\label{acheck}
\end{align}
Let $F' = F(\sqrt[\ell]{\alpha})$ for some choice of the root.  Using Corollary \ref{a=1cor} we have that $f(x) = x^\ell - \alpha$ is irreducible over $F$, each of the primes $\mfp_i$ is totally ramified in $F'/F$, and
\begin{equation}\label{llq}
\ell^{\ell} ~\big|~ D_{F'/F},
\end{equation}
and therefore 
\begin{equation}
N_{F/\QQ}\left(D_{F'/F}\right) \geq \ell^{\ell d}.
\end{equation}

We are now ready to prove theorem \ref{finram}.  Let $K/\QQ$ be an algebraic extension such that $\ell$ has bounded ramification indices in $K$, and set $F$ to be a number field of degree $d$ over $\QQ$ such that $F\subseteq K$ and no primes of $F$ lying over $\ell$ are ramified in $K/F$.  Let $\alpha \in \mcO_F$ satisfy (\ref{acheck}) as above, and let $L = K(\sqrt[\ell]{\alpha})$.  We want to show that $L/K$ is Bogomolov.  Let $F'$ be as above.  If $\mfp$ is a prime of $F$ lying over $\ell$, we know $\mfp$ is unramified in $K/F$ and totally ramified in $F'/F$, so we have that $K$ and $F'$ are linearly disjoint over $F$.

First suppose that $\rho(K/\QQ) < 1$, so that $\rho := \rho(K/F) < d$.  Our construction ensures now that
\begin{equation}\label{almosthere}
\mcE(K,F'/F) \geq \ell^{d\ell} > \ell^{{\rho}\ell}, 
\end{equation}
and therefore $L/K$ is Bogomolov by Theorem \ref{relbocrit}.  More specifically, let $\gamma \in L \setminus K$, so that $\ell = [K(\gamma):K]$.  Let $M/F$ be a finite extension such that $M \subseteq K$ and $[M(\gamma):M] = \ell$.  Then by following the proof of Theorem \ref{relbocrit}, we have 
\begin{equation}\label{nonbound}
H(\gamma) \geq \left\{ \ell^{d \ell} \cdot \ell^{-\rho(M/F) \cdot \ell} \right\}^{\frac{1}{2d\ell(\ell-1)}} \geq \ell^{\frac{d-\rho(K/F)}{2d(\ell-1)}} > 1,
\end{equation}
and in this case we are done using only our ramification criterion.

If $\rho = d$, we will have to use the following archimedean estimate of Garza.
\begin{theorem}[Garza \cite{sgbound}]
Let $K$ be a number field of degree $d$ over $\QQ$ with $r$ real places and $r'$ complex places.  If $K = \QQ(\gamma)$, then  
\begin{equation}\label{sgform}
H(\gamma) \geq \left( 2^{-d/r} + \sqrt{1+4^{-d/r}} \right)^{\frac{r}{2d}}.
\end{equation}
\end{theorem}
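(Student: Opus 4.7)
The plan is to follow the classical Schinzel--Smyth Mahler-measure strategy, refined (as in Garza's paper) to incorporate the count $r$ of real embeddings. First I would recast the problem via Mahler measure: let $f(x)=a_d\prod_{i=1}^d(x-\gamma_i)\in\ZZ[x]$ be the minimal polynomial of $\gamma$, with $\gamma_1,\dots,\gamma_r$ the real conjugates and the remaining conjugates coming in complex pairs. Then $d\cdot h(\gamma)=\log M(\gamma)=\log|a_d|+\sum_{i=1}^d\log^+|\gamma_i|$. Using $h(\gamma)=h(\gamma^{-1})$ and adding the two Mahler-measure expressions, one obtains the symmetric reformulation
\[
2d\,h(\gamma)=\log|a_d a_0|+\sum_{i=1}^d\bigl|\log|\gamma_i|\bigr|,
\]
in which $|a_d a_0|\geq 1$ and each complex pair contributes $2|\log|\gamma_j||\geq 0$. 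It thus suffices to bound $\sum_{i=1}^r\log\max(1,|\gamma_i|)$ from below by $\tfrac{r}{2}\log(2^{-d/r}+\sqrt{1+4^{-d/r}})$.

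Next I would exploit the arithmetic structure via an auxiliary parameter $\lambda>0$, to be optimized. The polynomial evaluation $|f(\lambda)f(-\lambda^{-1})|$, cleared of the denominator $\lambda^d$, is a nonzero integer, so
\[
\prod_{i=1}^d|(\lambda-\gamma_i)(\lambda^{-1}+\gamma_i)|\geq a_d^{-2}\,\lambda^{-d}.
\]
The sharp local real inequality is the single-variable estimate: for $x\in\RR$,
\[
|(\lambda-x)(\lambda^{-1}+x)|\leq \max(1,|x|)^2\cdot\mu(\lambda)^{-1},
\]
where $\mu(\lambda)=\tfrac{1}{2}(\lambda^{-1}-\lambda)+\tfrac{1}{2}\sqrt{(\lambda^{-1}-\lambda)^2+4}$ is the positive root of $y^2-(\lambda^{-1}-\lambda)y-1=0$, a generalized golden ratio; equality is attained at the extremal real number $x=\mu(\lambda)$. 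For complex conjugate pairs one uses the cruder estimate $|(\lambda-\gamma_j)(\lambda^{-1}+\gamma_j)|\leq 2\lambda^{-1}\max(1,|\gamma_j|)^2$, which contributes only an absorbable constant factor.

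Multiplying these local inequalities over all conjugates and combining with the arithmetic lower bound from $|f(\lambda)f(-\lambda^{-1})|\geq 1$ produces, after cancellation of $a_d^2$ and $\lambda^d$, an estimate of the form
\[
M(\gamma)^2\geq \mu(\lambda)^r \cdot (\text{factors depending on }\lambda, r, r').
\]
Choosing $\lambda$ so that $\mu(\lambda)=2^{-d/r}+\sqrt{1+4^{-d/r}}$ — equivalently, $\tfrac{1}{2}(\lambda^{-1}-\lambda)=2^{-d/r}$ — balances the $r$ real contributions against the $2^{d-r}$ factor from the complex places and the $\lambda^{-d}$ from the arithmetic constraint, so that all extraneous factors cancel. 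Taking logarithms and raising to the $1/d$-th power yields exactly $H(\gamma)\geq(2^{-d/r}+\sqrt{1+4^{-d/r}})^{r/(2d)}$.

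The main obstacle is establishing the sharp local real inequality and verifying that the specified choice of $\lambda$ leaves no slack, so that the bound is realized in the extremal limit by algebraic numbers whose minimal polynomial approaches $x^2-2^{1-d/r}x-1$ (degenerating to $x^2-x-1$ and the golden ratio in the totally real case $r=d$, recovering Schinzel's classical result). Balancing the archimedean contributions at complex places against the integer-polynomial inequality without wasting constants is what forces the particular form $2^{-d/r}+\sqrt{1+4^{-d/r}}$ in the answer.
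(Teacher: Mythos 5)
This statement is not proved in the paper at all: it is quoted verbatim from Garza \cite{sgbound}, so your sketch has to stand on its own, and as written it does not. The central ``sharp local real inequality'' $|(\lambda-x)(\lambda^{-1}+x)|\le \max(1,|x|)^2\,\mu(\lambda)^{-1}$ is false. For the choice you need, $\tfrac12(\lambda^{-1}-\lambda)=2^{-d/r}>0$, so $\lambda<1$ and $\mu(\lambda)>1$; but for real $x$ large the left side equals $x^2+(\lambda^{-1}-\lambda)x-1\sim x^2$, while the right side is $x^2/\mu(\lambda)<x^2$, so the inequality fails for all large $|x|$ (and the asserted equality case is also wrong: at $x=\mu(\lambda)$ the two sides are $2(\lambda^{-1}-\lambda)\mu(\lambda)$ and $\mu(\lambda)$). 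This is not cosmetic: no constant smaller than $1$ times $\max(1,|x|)^2$ can majorize a monic quadratic in $x$, which is exactly why the Schinzel--Garza-type arguments work with the symmetrized quantity $\max(|x|,|x|^{-1})$, i.e.\ carry an extra factor $|x|$ per conjugate that is later cancelled against $\prod_i|\gamma_i|=|a_0/a_d|$ via the product formula. You set up the symmetrized identity $2d\,h(\gamma)=\log|a_0a_d|+\sum_i\bigl|\log|\gamma_i|\bigr|$ but then abandon it in the local analysis; relatedly, the reduction ``it suffices to bound $\sum_{i\le r}\log\max(1,|\gamma_i|)$ from below'' is unattainable, since all real conjugates may lie in $(-1,1)$ and make that sum zero.

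The arithmetic input is also unjustified. The quantity $\lambda^d f(\lambda)f(-\lambda^{-1})=\pm f(\lambda)f^*(\lambda)$ (with $f^*(x)=x^df(-1/x)\in\ZZ[x]$) is an integer only for $\lambda\in\ZZ$, or after taking the full resultant over all conjugates of an algebraic $\lambda$. Your optimization forces $\lambda^{-1}-\lambda=2^{1-d/r}$, which makes $\lambda$ irrational in every case -- already for $r=d$ it is $(\sqrt5-1)/2$, and for $r\nmid d$ its degree grows with $r$ -- so the claimed lower bound $\prod_i|(\lambda-\gamma_i)(\lambda^{-1}+\gamma_i)|\ge a_d^{-2}\lambda^{-d}$ has no basis. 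Repairing it by using the resultant with the minimal polynomial of $\lambda$ (as in the classical totally real case, where one evaluates at both roots of $x^2+x-1$) requires local estimates at every conjugate of $\lambda$, not just at the optimized one, plus an argument that the resultant is nonzero; the hoped-for ``no slack'' cancellation then has to be re-examined and is not automatic. Finally, the contribution of the non-real conjugates is not an ``absorbable constant'': in this kind of argument every such factor enters the final bound multiplicatively, and it is precisely the bookkeeping of a factor like $2$ per non-real conjugate that produces the $d/r$-dependence $2^{-d/r}$ in Garza's constant, so it must be tracked exactly rather than waved away. As it stands the proposal establishes neither the stated inequality nor a weaker version of it.
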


Now we fix an arbitrary real number $\theta \in \left(\frac{2\ell-1}{2\ell}, 1\right)$.  If $\rho(M/\QQ) \leq \theta$, then as in (\ref{nonbound}) we have
\begin{equation}\label{nonbound2}
 H(\gamma) > \ell^{\frac{(1-\theta)}{2(\ell-1)}} > 1.
\end{equation}

On the other hand, if $\rho(M/\QQ) > \theta$, let $r$ and $s$ denote the number of real and complex archimedean places of $M$, respectively.  Notice that $M(\gamma) = M(\sqrt[\ell]{\alpha})$ has $r$ real places and $\frac{r(\ell-1)}{2} + s\ell$ complex places.  This means that 
\begin{align}
\rho (\QQ(\gamma)/\QQ) &\geq \rho(M(\gamma)/\QQ) = \frac{r +\frac{r(\ell-1)}{2} + s\ell}{\ell[M:\QQ]}\\
 &= \rho(M/\QQ) - \frac{r_M}{[M:\QQ]} \cdot \left( \frac{\ell-1}{2\ell} \right) > \theta - \frac{\ell-1}{2\ell}.
\end{align}
If $\QQ(\gamma)$ has $r'$ real places and $s'$ complex places, then we now have that
\begin{equation}
\frac{r'}{d'} = 2 \rho(\QQ(\gamma)/\QQ) - 1 > 2\theta - 1 - \frac{\ell-1}{\ell} > 0
\end{equation}
by our choice of $\theta > \frac{2\ell-1}{2\ell}$.

Now we may bound below the height of $\gamma$ by an absolute constant using (\ref{sgform}).  Explicitly, writing $\phi = 2\theta-1 - \frac{\ell-1}{\ell}$, we have
\begin{equation}\label{archbound}
H(\gamma) \geq \left( 2^{-\frac{1}{\phi}} + \sqrt{1+4^{-\frac{1}{\phi}}} \right)^{\frac{\phi}{2}} > 1.
\end{equation}

Either (\ref{nonbound2}) or (\ref{archbound}) must hold.  Taking the minimum of these bounds, we see that $H(\gamma)$ is bounded below by a constant greater than 1 which depends only on $\ell$, and our proof is complete.

\end{proof}

\section{Examples and the proof of Theorem \ref{4cases}}\label{examples}
We begin with two examples that give us Theorem \ref{4cases}.

\begin{example}[$L/K$ not Bogomolov]\label{norb}
\normalfont
Let $b$ be a nonsquare rational number and let $K = \QQ(b^{1/2},b^{1/4},b^{1/8},\dots)$, for any choices of the roots.  Notice that $b^{1/3} \not \in K$, and let $L = K(b^{1/3})$.  The extension $L/K$ is not Bogomolov.   To see this, consider the elements $b^{1/3}b^x \in L\setminus K$, where $x$ is a rational number close to $-1/3$ with denominator a power of $2$.  Notice that $h(b^{1/3}b^x) = h(b^{x+\frac{1}{3}} = (x + \frac{1}{3})h(b) \to 0$ as $x \to -1/3$.  Many similar examples can be constructed easily, including of course infintie relative extensions.
\end{example}  

\begin{example}[$L/K$ Bogomolov]\label{yesrb}
\normalfont
Let $K = \QQ(3^{1/3},3^{1/9},3^{1/27},\dots)$, and note that $3$ is the only rational prime that ramifies in $K$.  Let $3 < p_1 < p_2 < \cdots$ be an infinite sequence of primes congruent to $3 \imod 4$.  Set $K_0 = K$, and for each $n \geq 1$ set $K_n = K_{n-1}(\sqrt{p_n})$.  For a given $n \geq 1$, we wish to apply Lemma \ref{prefall} to estimate the height of an element $\gamma \in K_n^\times \setminus K_{n-1}$.  We set $F = \QQ$ and choose $M\subseteq K_{n-1}$ to be a number field containing $\sqrt{p_1}, \sqrt{p_2},\dots,\sqrt{p_{n-1}},$ and the coefficients for the minimal polynomial of $\gamma$ over $K_n$.  We have the trivial estimate $\rho(M/F) \leq d$, so applying (\ref{rppfb}), we have
\begin{equation}
H(\gamma) \geq 2^{-\frac{1}{2}}\cdot p_n^{\frac{1}{4}} = \left(\frac{p_n}{4}\right)^{\frac{1}{4}} \geq \left(\frac{p_1}{4}\right)^{\frac{1}{4}}. 
\end{equation}
Letting $L = \cup_n K_n$, we now have that $L/K$ is an infinite Bogomolov extension, and in fact $L$ can be constructed so that the lower bound on the height of an element of $L^\times \setminus K^\times$ is arbitrarily large.  This and Example \ref{norb} establish Theorem \ref{4cases}, as in both cases the base field $K$ clearly does not satisfy (B).

If the roots $3^{1/3^i}$ are chosen in a compatible way (e.g. if we fix an embedding $\QQbar \hookrightarrow \CC$ and impose that the roots are all real), then $K$ has the property that all of its proper subfields are finite extensions of $\QQ$.  (The interested reader will verify that the only subfields of $\QQ(3^{1/3^n})$ are $\QQ(3^{1/3^i})$, $0 \leq i \leq n$.)  Therefore $K$ is not a Bogomolov extension of any subfield.
\end{example}


\begin{example}
\normalfont
We now construct a Galois extension $K/\QQ$ which does not satisfy (B), but with the property that for every element $\alpha \in K^\times$, there is an integer $\ell$ such that $K(\sqrt[\ell]{\alpha})/K$ is Bogomolov.  As mentioned before, the maximal CM field $\QQ^{tr}(i)$ also enjoys this property -- in fact, all finite extensions of this field are Bogomolov.  Our example is of a different sort, as it is generated by polynomials with no real roots, which prevents us from using archimedean estimates.  
It will suffice to consider $\alpha$ an algebraic integer, as any extension $K(\sqrt[\ell]{\alpha})$ could have been generated in this way by clearing denominators.

Set $K_0 = \QQ$, $b_1 = 12$, and let $K_{n}$ be generated over $K_{n-1}$ by adjoining all of the roots of
\begin{equation}
f_n(x) = x^{b_n} + x + 1,
\end{equation}
 where $b_n$ is a multiple of 12 chosen so that the discriminant of $f_n$ is not divisible by any of the rational primes which are ramified in $K_{n-1}$.  This is easily accomplished, since the discriminant of $f_n$ is of the form $-\big(b_n^{b_n} + (b_n-1)^{(b_n-1)\big)}$ (see \cite{masserdisc} for example -- we have used that $4|b_n$), which will not be divisible by any prime dividing $b_n$.  By the Chebotarev Density Theorem, any number field has infinitely many rational primes that are totally split in it.  Therefore we can also choose a new prime $\ell_n$ at each step such that $\ell_n$ splits completely in $K_{n-1}$ (and hence in $K_m$ for all $m < n$), and also choose $b_n$ to be divisible by $\ell_1\ell_2\cdots\ell_n$.  In this way, we obtain an infinite set of primes $S = \{\ell_1,\ell_2,\dots\}$, none of which is ramified in any of the fields $K_n$.  Furthermore, for any $n$ we have that $S$ contains arbitrarily large primes that split completely in $K_n$.  Let $K = \cup_{n=1}^\infty K_n$.  By making each $b_n$ divisible by $3$, we ensure that $f_n(x)$ is irreducible over $\QQ$ with symmetric Galois group \cite[Theorem 1]{osada}, \cite[Theorem 1]{selmertri}.  By making each $b_n$ even, we ensure that each polynomial $f_n(x)$ has no real roots, so we have $\rho(K/\QQ) = \frac{1}{2}$.  
Using basic facts about the height, we see that if $\alpha$ is a root of $f_n(x)$, we have
\begin{equation}
b_n\cdot h(\alpha) = h(\alpha^{b_n}) = h(\alpha+1) \leq \log2 + h(\alpha) + h(1) = \log2 + h(\alpha),
\end{equation}
which yields
\begin{equation}\label{yey}
0 < h(\alpha) \leq \frac{\log2}{b_n-1},
\end{equation}
which shows that $K$ does not satisfy (B).

For an odd prime $\ell \in S$, let $\beta_\ell$ denote some root of $x^\ell - \alpha$.  We assume without loss of generality that $\alpha$ is not an $\ell\nth$ power in $K$, or else we could replace $\alpha$ by an $\ell\nth$ root until this hypothesis is satisfied.  By the construction of $K$, we know that $\QQ(\alpha)$ is contained in a finite Galois extension of whose Galois group $G$ is a subdirect product of symmetric groups of degree at least 6.  This implies that $x^\ell-\alpha$ is irreducible over $K$, as its Galois group $H$ (over its field of definition) is solvable of odd prime degree.  (There can be no surjection from $G$ onto a nontrivial quotient of $H$.)  For any $m$ such that $\alpha \in K_m$, the only rational primes possibly ramifying in $K_m(\beta_\ell)$ will be those lying below prime divisors of $\alpha\mcO_{K_m}$, those ramified in $K_m$, and $\ell$.  Since $\ell$ does not ramify in any field $K_n$, we can choose $m = m(\alpha)$ large enough that $\alpha \in K_m$,  and such that no 
primes ramifying in $K_m(\beta_\ell)$ divide the discriminant of $f_n$ for any $n>m$, for any $\ell \in S$.  As described above, we know that $K_m(\beta_\ell)$ and $K$ are linearly disjoint over $K_m$.  Let $d = [K_m:\QQ]$.

Now by Corollary \ref{v} we can fix an $\ell \in S$ such that $\ell$ splits completely in $K_m$ and is large enough so that 
\begin{equation}\label{divide4}
\ell^{\lceil \rho \ell \rceil}\mcO_{K_m} ~\big|~ D_{K_m(\beta)/K_m},
\end{equation}
where $\beta = \beta_\ell$, and $\rho = \frac{3}{4}$, say.  Since none of the primes ramifying in $K_m(\beta)/K_m$ are ramified in $K/K_m$, we have 
\begin{equation}\label{krazye}
\mcE(K, K_m(\beta)/K_m) > \ell^{\ell d/2} = \ell^{\ell \rho(K/F)}.
\end{equation}
Theorem \ref{relbocrit} shows that $K(\beta)/K$ is Bogomolov.

\end{example}

\bibliography{b}{}
\end{document}